\newtheorem{myfact}{Fact}
\newtheorem{mydef}{Definition}
\newtheorem{myassump}{Assumption}
\newtheorem{mytheorem}{Theorem}
\newtheorem{mycorollary}{Corollary}
\newtheorem{myproposition}{Proposition}
\newcounter{ale}
\newenvironment{liste}{\begin{itemize}}{\end{itemize}}
\newcommand{\aliste}{\begin{liste} \setcounter{ale}{1}}
\newcommand{\zliste}{\end{liste}}
\title{{\LARGE {\bf Minimal Actuator Placement with Bounds on Control Effort}}}
\author{V.~Tzoumas, M.~A.~Rahimian, G.~J.~Pappas, A.~Jadbabaie{$^{\star}$}
\thanks{$^{\star}$All authors are with the Department of Electrical and Systems Engineering, University of Pennsylvania, Philadelphia, PA 19104-6228 USA (email: {\fontsize{8}{8}\selectfont\ttfamily\upshape \{vtzoumas, mohar, pappasg, jadbabai\}@seas.upenn.edu}).}

\thanks{This work was supported in part by ARO MURI W911NF-12-1-0509, in part TerraSwarm, one of six centers of STARnet, a Semiconductor Research Corporation program sponsored by MARCO and DARPA, and in part by AFOSR Complex Networks Program.}

}
\begin{document}
\maketitle

\begin{abstract} We address the problem of minimal actuator placement in linear systems so that the volume of the set of states reachable with one unit or less of input energy is lower bounded by a desired value.  First, following the recent work of Olshevsky, we prove that this is NP-hard.  Then,  we provide an efficient algorithm which, {{for a given range of problem parameters}}, approximates up to a multiplicative factor of $O(\log n)$,  $n$ being the network size, any optimal actuator set that meets the same energy criteria; this is the best approximation factor one can achieve in polynomial time, in the worst case. Moreover, the algorithm uses a perturbed version of the involved control energy metric, which we prove to be supermodular.  Next, we focus on the related problem of cardinality-constrained actuator placement for minimum control effort, where the optimal actuator set is selected to maximize the volume of the set of states reachable with one unit or less of input energy.  While this is also an NP-hard problem, we use our proposed algorithm to efficiently approximate its solutions as well.
\end{abstract}

\begin{IEEEkeywords} Multi-agent Networked Systems, Input Placement, Leader Selection, Controllability Energy Metrics, Minimal Network Controllability.\end{IEEEkeywords}

\section{Introduction}\label{sec:Intro}
During the past decade, an increased interest in the analysis of large-scale systems has led to a variety of studies that range from the mapping of the human's brain functional connectivity to the understanding of the collective behavior of animals, and the evolutionary mechanisms of complex ecological systems~\cite{newman2006structure,hermundstad2013structural,katz2011inferring,jordan2004network}.  At the same time, control scientists develop methods for the regulation of such complex systems, with the notable examples in~\cite{orosz2010controlling}, for the control of biological systems;~\cite{rajapakse2012can}, for the regulation of brain and neural networks;~\cite{khanafer2014information}, for robust information spread over social networks, and~\cite{Chen-2012-DR-Springer}, for load management in smart grid.

On the other hand, the large size of these systems, as well as the need for low cost control, has made the identification of a small fraction of their states, to steer them around the entire space, an important problem~\cite{2013arXiv1304.3071O, ramos2014np,summers2016submodularity, bullo2014}.  This is a  task of formidable complexity; indeed, it is shown in~\cite{2013arXiv1304.3071O} that finding a small number of actuators, so that a linear system is controllable, is NP-hard.  However, mere controllability is of little value if the required input energy for the desired transfers is exceedingly high, when, for example, the controllability matrix is close to singularity~\cite{Chen:1998:LST:521603}.  Therefore, by choosing input states to ensure controllability alone, one may not achieve a cost-effective control for the system. 

{In this paper, we address this important requirement by providing efficient approximation algorithms to actuate a small fraction of a system's states so that a specified control energy performance over the entire state space is guaranteed.} In particular, we first consider the selection of a minimal number of actuated states so that a pre-specified lower bound on the volume of the set of states reachable with one or less units of input energy is satisfied.  Finding such a subset of states is a challenging task, since it involves the search for a small number of actuators that induce controllability, which constitutes a combinatorial problem that can be computationally intensive. Indeed, identifying a small number of actuated states for inducing controllability alone is NP-hard~\cite{2013arXiv1304.3071O}.  Therefore, we extend this computationally hard problem by introducing an energy performance requirement on the choice of the optimal actuator set, and we solve it with an efficient approximation algorithm.

Specifically, we first generalize the involved energy objective to an $\epsilon$-close one, which remains well-defined even for actuator sets that render the system uncontrollable.  Then, we make use of this metric and relax the implicit controllability constraint from the original actuator placement problem.  Notwithstanding, we prove that for small values of $\epsilon$ all solutions of this auxiliary program still render the system controllable.  This fact, along with the supermodularity of the generalized objective with respect to the choice of the actuator set, leads to an efficient algorithm which, {for a given range of problem parameters}, approximates up to a multiplicative factor of $O(\log n)$, where $n$ is the size of the system, any optimal actuator set that meets the specified energy criterion.  Moreover, this is the best approximation factor one can achieve in polynomial time, in the worst case.  Hence, with this algorithm {we address the open problem of minimal actuator placement subject to bounds on the control effort}~\cite{2013arXiv1304.3071O, summers2016submodularity,bullo2014, PhysRevLett.108.218703,PhysRevLett.110.208701}.  

Relevant results are also found in~\cite{bullo2014}, where the authors study the controllability of a system with respect to the smallest eigenvalue of the controllability Gramian, and they derive a lower bound on the number of actuators so that this eigenvalue is lower bounded by a fixed value.  Nonetheless, they do not provide an algorithm to identify the actuators that achieve this value.  

Next, we consider the problem of cardinality-constrained actuator placement for minimum control effort, where the optimal actuator set is selected so that the volume of the set of states that
can be reached with one unit or less of input energy is maximized.  The most related works to this
problem are the \cite{summers2016submodularity} and \cite{7039832}, in which the authors assume a controllable system and consider the problem of choosing a few extra actuators in order to optimize some of the input energy metrics proposed in~\cite{Muller1972237}.  Their main contribution is in observing that these energy metrics are supermodular with respect to the choice of the extra actuated states.  The assumption of a controllable system is necessary since these metrics depend on the inverse of the controllability Gramian, {as they capture the control energy for steering the system around the entire state space}.  Nonetheless, it should be also clear that making a system controllable by first placing some actuators to ensure controllability alone, and then adding some extra ones to optimize a desired energy metric, introduces a sub-optimality that is carried over to the end result.  In this paper, we follow a parallel line of work to the minimal actuator placement problem, and provide an efficient algorithm that selects {all the actuated states to maximize the volume of the set of states that
can be reached with one unit or less of input energy without any assumptions on the controllability of the involved system}.  

A similar actuator placement problem is studied in~\cite{bullo2014} for stable systems.  Nevertheless, its authors propose a heuristic actuator placement procedure that does not constrain the number of available actuators and {does not optimize} their control energy objective.
Our proposed algorithm selects {a cardinality-constrained actuator set that minimizes a control energy metric, even for unstable systems.}

The remainder of this paper is organized as follows.  The formulation and model for the actuator placement problems are set forth in Section~\ref{sec:Prelim}, where the corresponding integer optimization programs are stated.  In Sections~\ref{sec:Min_N} and~\ref{sec:Min_E} we discuss our main results, including the intractability of these problems, as well as the supermodularity of the involved control energy metrics with respect to the choice of the actuator sets.  Then, we provide efficient approximation algorithms for their solution that guarantee a specified control energy performance over the entire state space.  Section~\ref{sec:conc} concludes the paper.

\section{Problem Formulation} \label{sec:Prelim}

\paragraph*{Notation}

We denote the set of natural numbers $\{1,2,\ldots\}$ as $\mathbb{N}$, the set of real numbers as  $\mathbb{R}$,  and we let $[n]\equiv \{1, 2, \ldots, n\}$ for all $n \in \mathbb{N}$.  Also, given a set $\mathcal{X}$, we denote as $|\mathcal{X}|$ its cardinality.  Matrices are represented by capital letters and vectors by lower-case letters. For a matrix ${A}$, ${A}^{T}$ is its transpose and $A_{ij}$ is its element located at the $i-$th row and $j-$th column.  If ${A}$ is positive semi-definite or positive definite, we write ${A} \succeq {0}$ and ${A}\succ {0}$, respectively. Moreover, for $i \in [n]$, we let ${I}^{(i)}$ be an $n \times n$ matrix with a single non-zero element: $I_{ii}=1$, while $I_{jk}=0$, for $j$, $k\neq i$.  Furthermore, we denote as ${I}$ the identity matrix, whose dimension is inferred from the context.  Additionally, for ${\delta} \in \mathbb{R}^n$, we let $\text{diag}({\delta})$ denote an $n \times n$ diagonal matrix such that $\text{diag}({\delta})_{ii}=\delta_i$ for all $i \in [n]$.  {Finally, we set $\{0,1\}^n$ to be the set of vectors in $\mathbb{R}^n$ whose elements are either zero or one.}

\subsection{Actuator Placement Model}

Consider a linear system of $n$ states, $x_1, x_2,\ldots,x_n$, whose evolution is described by
\begin{align}
\dot{{x}}(t) = {A}{{x}}(t) + {B}{{u}}(t), t > t_0,
\label{eq:dynamics}
\end{align} where  $t_0 \in \mathbb{R}$ is fixed, ${x}\equiv \{x_1,x_2,\ldots,x_n\}$, $\dot{{x}}(t)\equiv d{x}/dt$, while ${u}$ is the corresponding input vector.  The matrices ${A}$ and ${B}$ are of appropriate dimension.  We equivalently refer to~\eqref{eq:dynamics} as a network of $n$ nodes, $1, 2,\ldots, n$, which we associate with the states $x_1, x_2,\ldots, x_n$, respectively.  Moreover, we denote their collection as $\mathcal{V}\equiv[n]$.

Henceforth, ${A}$ is given while B is a \textit{diagonal zero-one} matrix that we design
so that~\eqref{eq:dynamics} satisfies a specified control energy criterion over the entire state space.

\begin{myassump}\label{assump:Diag_B}
${B}=\emph{\text{diag}}({\delta})$, where ${\delta}\in\{0,1\}^{n}$.
\end{myassump}

Specifically, if $\delta_i=1$, state $x_i$ may receive an input, while if $\delta_i=0$, it receives none. 

\begin{mydef}[{Actuator Set, Actuator}]
Given a $\delta \in \{0,1\}^{n}$, let $\Delta\equiv\{i: i\in \mathcal{V} \text{ and } \delta_i=1\}$; then, $\Delta$ is called an \emph{actuator set} and each $i \in \Delta$ an \emph{actuator}.
\end{mydef}

\subsection{Controllability and Related  Energy Metrics}\label{subsec:cntr_en}

We consider the notion of controllability and relate it to the problems of this paper, i.e., the minimal actuator placement for constrained control energy and the cardinality-constrained actuator placement for minimum control effort.

System~\eqref{eq:dynamics} is controllable --- equivalently, $(A,B)$ is controllable --- if for any finite $t_1>t_0$ and any initial state ${x}_0\equiv {x}(t_0)$ it can be steered to any other state ${x}_1\equiv {x}(t_1)$ by some input ${u}(t)$ defined over $[t_0, t_1]$.  Moreover, for general matrices ${A}$ and ${B}$, the controllability condition is equivalent to the matrix 
\begin{align}
{W}\equiv \int_{t_0}^{t_1} \mathrm{e}^{{A}(t-t_0)} {B}{B}^{T} \mathrm{e}^{{A}^{T}(t-t_0)}\,\mathrm{d}{t},\label{eq:general_gramian}
\end{align} being positive definite for any $t_1>t_0$~\cite{Chen:1998:LST:521603}.  Therefore, we refer to ${W}$ as the \textit{controllability matrix} of~\eqref{eq:dynamics}.

The controllability of a linear system is of interest because it is related to the solution of the following minimum-energy transfer problem

\begin{equation}\label{pr:min_energy_transfer}
\begin{aligned}
 \underset{{u}(\cdot)}{\text{minimize}} & \; \;  \; 
 \int_{t_0}^{t_1} {u}(t)^T{u}(t)\,\mathrm{d}{t}\\
\text{subject to} \\
& \dot{{x}}(t) = {A}{{x}}(t) + {B}{{u}}(t), t_0 <t \leq t_1,\\
& {x}(t_0)= 0, {x}(t_1)={x_1},
\end{aligned}
\end{equation} where ${A}$ and ${B}$ are any matrices of appropriate dimension.

In particular,  if for the given ${A}$ and ${B}$~\eqref{eq:dynamics} is controllable the resulting minimum control energy is given by
\begin{align}
{x}_1^{T}{W}^{-1}{x}_1, \label{eq:min_energy}
\end{align}
where $\tau=t_1-t_0$~\cite{Muller1972237}. 
Thereby, the states that belong to the eigenspace of the smallest eigenvalues of~\eqref{eq:general_gramian} require higher energies of control input~\cite{Chen:1998:LST:521603}.  Extending this observation along all the directions of transfers in the state space, we infer that the closer ${W}$ is to singularity the larger the expected input energy required for these transfers to be achieved~\cite{Muller1972237}.  For example, consider the case where ${W}$ is singular, i.e., when there exists at least one direction along which system~\eqref{eq:dynamics} cannot be steered~\cite{Chen:1998:LST:521603}.  Then, the corresponding minimum control energy along this direction is \textit{infinity}.  

This motivates the consideration of control energy metrics that quantify the {{steering energy} along all the directions in the state space}, as the $\log\det({W}^{-1})$~\cite{Muller1972237}.  Indeed, this metric is {well-defined only for controllable systems} --- $W$ must be invertible --- and is {directly related} to~\eqref{eq:min_energy}.  In more detail, {$\sqrt{\det({W}^{-1})}$ is inversely proportional to the volume of the set of states reachable with one or less units of input energy, i.e., the volume of $\{x: {x}^{T}{W}^{-1}{x}\leq 1\}$; as a result, when $\log\det({W}^{-1})$ is minimized, the volume of $\{x: {x}^{T}{W}^{-1}{x}\leq 1\}$ is maximized.}     In this paper, we aim to select a small number of actuators for system~\eqref{eq:dynamics} so that $\log\det({W}^{-1})$ either meets a specified upper bound or is minimized.

Per Assumption~\ref{assump:Diag_B}, further properties for the controllability matrix are due: For any actuator set $\Delta$, let ${W}_\Delta \equiv {W}$; then,
\begin{align}
{W}_{\Delta} = \sum_{i=1}^n \delta_i {W}_i,
\label{eq:gramianTOdelta}
\end{align}where ${W}_i \equiv \int_{t_0}^{t_1} \mathrm{e}^{{A}t} {I}^{(i)} \mathrm{e}^{{A}^{T} t}\,\mathrm{d}{t}$ for any $i \in [n]$.  This follows from~\eqref{eq:general_gramian} and the fact that ${B}{B}^{T}={B}=\sum_{i = 1}^{n} \delta_i {I}^{(i)}$ for ${B} =\text{diag}({\delta})$.  Finally, for any ${\Delta_1}\subseteq{\Delta_2}\subseteq \mathcal{V}$,~\eqref{eq:gramianTOdelta} and ${W}_1, W_2, \ldots, W_n \succeq {0}$ imply ${W}_{\Delta_1}\preceq {W}_{\Delta_2}$. 

\subsection{Actuator Placement Problems}\label{subsec:problems}

We consider the selection of a small number of actuators for system~\eqref{eq:dynamics} so that $\log\det({W}^{-1})$ either satisfies an upper bound or is minimized.  The challenge is in doing so with as few actuators as possible.  This is an important improvement over the existing literature where the goal of actuator placement problems has either been to ensure controllability alone~\cite{2013arXiv1304.3071O} or the weaker property of structural controllability~\cite{jafari2011leader,Commault20133322}.  Other relevant results consider the task of leader-selection~\cite{clark2014_2,clark2014_1}, where the leaders are the actuated states and are chosen so to minimize a mean-square convergence error of the remaining states.  

Furthermore, the most relevant works to our study are the \cite{summers2016submodularity} and \cite{7039832} since its authors consider the minimization of $\log\det({W}^{-1})$; nevertheless, their results rely on a pre-existing actuator set that renders~\eqref{eq:dynamics} controllable although this set is not selected for the minimization of this energy metric.   One of our contributions is in achieving optimal actuator placement for minimum control effort {without assuming controllability beforehand.} Also, the authors of~\cite{bullo2014} adopt a similar framework for actuator placement but focus on deriving an upper bound for the smallest eigenvalue of $W$ with respect to the number of actuators and a lower bound for the required number actuators so that this eigenvalue takes a specified value.  In addition, they consider the maximization of $\text{tr}({W})$;
however, their techniques cannot be applied when minimizing the $\log\det({W}^{-1})$, while the maximization of $\text{tr}({W})$ may not ensure controllability~\cite{bullo2014}.  

We next provide the exact statements of our actuator placement problems, while their solution analysis follows in Sections~\ref{sec:Min_N} and~\ref{sec:Min_E}.  We first consider the problem
\begin{equation}\tag{I}\label{pr:min_set}
\begin{aligned}
 \underset{\Delta \subseteq \mathcal{V}}{\text{minimize}} & \; \;  \; 
 |\Delta|\\
\text{subject to} \\
&\log\det({W}_\Delta^{-1}) \leq E,
\end{aligned}
\end{equation} 
for some constant $E$. Its domain is $\{\Delta:  \Delta \subseteq \mathcal{V} \text{ and } (A,B(\Delta)) \text{ is controllable}\}$ since the controllability matrix $W_{(\cdot)}$ must be invertible.  {Moreover, it is NP-hard, as we prove in Appendix \ref{appen:NP}.}

Additionally, Problem~\eqref{pr:min_set} is feasible for certain values of $E$.  In particular, for any $\Delta$ such that $(A,B(\Delta))$ is controllable, $0 \prec {W}_\Delta$, i.e., $\log\det({W}_\mathcal{V}^{-1})\leq \log\det({W}_\Delta^{-1})$  since for any $\Delta$~\eqref{eq:gramianTOdelta} implies ${W}_\Delta\preceq {W}_\mathcal{V}$~\cite{bernstein2009matrix}; thus,~\eqref{pr:min_set} {is feasible for}
\begin{align}
E\geq \log\det({W}_\mathcal{V}^{-1}).\label{lo_bound_E}
\end{align} 

Moreover,~\eqref{pr:min_set} is a generalized version of the minimal controllability problem of~\cite{2013arXiv1304.3071O} so that its solution not only ensures controllability but also satisfies a guarantee in terms of a control energy metric; indeed, for $E\rightarrow\infty$ we recover the problem of~\cite{2013arXiv1304.3071O}.

We next consider the problem

\begin{equation}\tag{II}\label{pr:original}
\begin{aligned}
 \underset{\Delta \subseteq \mathcal{V}}{\text{minimize}} & \; \;  \; \log\det({W}_\Delta^{-1}) \\
\text{subject to} \\
&  |\Delta|\leq r,
\end{aligned}
\end{equation} where the goal is to find at most $r$ actuated states so that the volume of the set of states that
can be reached with one unit or less of input energy is maximized.  Its domain is $\{\Delta : \Delta\subseteq \mathcal{V}, |\Delta|\leq r \text{ and } (A,B(\Delta)) \text{ is controllable}\}$.  {Moreover, due to the NP-hardness of Problem \eqref{pr:min_set}, Problem \eqref{pr:original} is also NP-hard (cf.~Appendix \ref{appen:NP}).}

{Because~\eqref{pr:min_set} and~\eqref{pr:original} are NP-hard, we need to identify efficient approximation algorithms for their general solution}; this is the subject of Sections~\ref{sec:Min_N} and~\ref{sec:Min_E}.  In particular, in Section~\ref{sec:Min_N} we consider Problem~\eqref{pr:min_set} and provide for it a best approximation algorithm, {for a given range of problem parameters}.  {To this end, we first define an auxiliary program, which ignores the controllability constraint of~\eqref{pr:min_set}, and, nevertheless, admits an efficient approximation algorithm whose solutions not only satisfy an energy bound that is $\epsilon$-close to the original one but also render system~\eqref{eq:dynamics} controllable.}  Then, in Section~\ref{sec:Min_E} we turn our attention to~\eqref{pr:original}, and following a parallel line of thought as for~\eqref{pr:min_set}, we efficiently solve this problem as well. 

{Since the approximation algorithm for the aforementioned auxiliary program for \eqref{pr:min_set} is based on results for supermodular functions, we present below a brief overview of the relevant concepts.  The reader may consult \cite{krause2012submodular} for a survey on these results.}

\subsection{Supermodular Functions}\label{subsec:supermod}

{
We give the definition of a supermodular function, as well as, a relevant result that will be used in Section \ref{sec:Min_N} to construct an approximation algorithm for Problem \eqref{pr:min_set}.  The material of this section is drawn from \cite{citeulike:416650}.}

Let $\mathcal{V}$ be a finite set and denote as $2^\mathcal{V}$ its power set.

\begin{mydef}[Submodularity and supermodularity]
{A function $h:2^\mathcal{V}\mapsto \mathbb{R}$ is \emph{submodular} if for any sets $\Delta$ and $\Delta'$, with $\Delta \subseteq \Delta' \subseteq \mathcal{V}$, and any $a\notin \Delta'$,
\[
h(\Delta \cup \{a\})-h(\Delta)\geq h(\Delta' \cup \{a\})-h(\Delta').
\]
A function $h:2^\mathcal{V}\mapsto \mathbb{R}$ is \emph{supermodular} if $(-h)$ is submodular.}
\end{mydef}

{An alternative definition of a submodular function is based on the notion of non-increasing set functions.}

\begin{mydef}[Non-increasing and non-decreasing Set Function]
{A function $h:2^\mathcal{V}\mapsto \mathbb{R}$ is a \emph{non-increasing set function} if for any $\Delta \subseteq \Delta' \subseteq \mathcal{V}$, $h(\Delta)\geq h(\Delta')$. Moreover, $h$ is a \emph{non-decreasing set function} if $(-h)$ is a non-increasing set function.}
\end{mydef}

{Therefore, a function $h:2^\mathcal{V}\mapsto \mathbb{R}$ is submodular if, for any $a\in \mathcal{V}$, the function $h_a:2^{\mathcal{V}\setminus\{a\}}\mapsto \mathbb{R}$, defined as $h_a(\Delta)\equiv h(\Delta\cup \{a\})-h(\Delta)$, is a non-increasing set function.  This property is also called the \emph{diminishing returns property}.}

{Next, we present a fact from the supermodular functions minimization literature, that we use in Section \ref{sec:Min_N} so as to construct an approximation algorithm for Problem \eqref{pr:min_set}. In particular, consider the following optimization program, which is of similar structure to \eqref{pr:min_set}, where $h:2^\mathcal{V}\mapsto \mathbb{R}$ is a non-decreasing, supermodular set function:}

\begin{equation}\tag{$\mathcal{O}$} \label{pr:X}
\begin{aligned}
 \underset{\Delta \subseteq \mathcal{V}}{\text{minimize}} & \; \;  \; 
 |\Delta|\\
\text{subject to} \\
&h(\Delta) \leq E.
\end{aligned}
\end{equation}

{The following greedy algorithm has been proposed for its approximate solution, for which, the subsequent fact is true.}

\begin{algorithm}
\caption{Approximation Algorithm for the Problem~\eqref{pr:X}.}
\begin{algorithmic}
\REQUIRE $h$, $E$.
\ENSURE Approximate solution to Problem \eqref{pr:X}.
\STATE $\Delta\leftarrow\emptyset$
\WHILE {$h(\Delta) > E $} \STATE{ 	
    $a_i \leftarrow a'\in \arg\max_{a \in \mathcal{V}\setminus \Delta}\{
	h(\Delta)-h(\Delta\cup \{a\})
	\}$\\
	\quad \mbox{} $\Delta \leftarrow \Delta \cup \{a_i\}$	
	}
\ENDWHILE
\end{algorithmic} \label{alg:X}
\end{algorithm}

\begin{myfact}\label{fact}
{Denote as $\Delta^\star$ a solution to Problem~\eqref{pr:X} and as $\Delta_0, \Delta_1, \ldots$ the sequence of sets picked by Algorithm \ref{alg:X}.  Moreover, let $l$ be the smallest index such that $h(\Delta_l) \leq E$.  Then,
\[
\frac{l}{|\Delta^\star|} \leq 1+\log \frac{h(\mathcal{V})-h(\emptyset)}{h(\mathcal{V})-h(\Delta_{l-1})}.
\]}
\end{myfact}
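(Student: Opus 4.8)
This statement is the classical approximation bound for the greedy algorithm applied to a submodular set covering problem (as in~\cite{citeulike:416650}), specialized to the supermodular minimization in~\eqref{pr:X}, and the plan is to establish it by the standard geometric-decay argument. Throughout I write $k:=|\Delta^\star|$. The properties I would rely on are that $h$ is monotone, that $-h$ has the diminishing-returns property (which is exactly what supermodularity of $h$ provides), that $h(\mathcal V)$ is the extreme value of $h$, and that \eqref{pr:X} is feasible --- indeed $\Delta=\mathcal V$ is feasible. In this language Algorithm~\ref{alg:X} is the greedy procedure that at each step appends the element producing the largest single-step improvement of $h$ and halts the first time the constraint is satisfied; $l$ is that first step.

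The crux is a per-step progress inequality. Fix an index $i\le l$, so $\Delta_{i-1}$ does not yet satisfy the constraint, and let $a_i$ be the element it selects; put $\rho_i:=h(\Delta_{i-1})-h(\Delta_i)\;(\ge 0)$ for the improvement obtained at that step. I would prove
\[
\rho_i\ \ge\ \frac{h(\Delta_{i-1})-h(\mathcal V)}{k}
\]
by the following chain: first, monotonicity together with the feasibility/optimality of $\Delta^\star$ gives $h(\Delta_{i-1})-h(\mathcal V)\le h(\Delta_{i-1})-h(\Delta^\star\cup\Delta_{i-1})$ (this is the place where the quoted, $h(\mathcal V)$-sharp form of the bound enters; with a general threshold $E$ the value $E$ replaces $h(\mathcal V)$ here and throughout); second, telescoping $h(\Delta_{i-1})-h(\Delta^\star\cup\Delta_{i-1})$ over the elements of $\Delta^\star\setminus\Delta_{i-1}$ and invoking the diminishing-returns property bounds each telescoped term by the improvement obtained from appending the corresponding single element directly to $\Delta_{i-1}$; third, by the greedy maximality of $a_i$ each such single-element improvement is at most $\rho_i$, and there are at most $k$ of them.

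Given the per-step estimate, the conclusion follows by iteration. Let $g_i:=h(\Delta_i)-h(\mathcal V)\ge 0$; it is strictly positive for $i\le l-1$ because $\Delta_{l-1}$ does not meet the constraint whereas the constraint certainly holds at the value $h(\mathcal V)$. The per-step inequality reads $g_i=g_{i-1}-\rho_i\le g_{i-1}(1-1/k)$, so $g_{l-1}\le g_0(1-1/k)^{\,l-1}\le g_0\,\mathrm e^{-(l-1)/k}$, where $g_0=h(\emptyset)-h(\mathcal V)$. Taking logarithms and using $g_{l-1}>0$ gives $(l-1)/k\le\log(g_0/g_{l-1})$, that is
\[
\frac{l}{k}\ \le\ 1+\log\frac{h(\emptyset)-h(\mathcal V)}{h(\Delta_{l-1})-h(\mathcal V)}\ =\ 1+\log\frac{h(\mathcal V)-h(\emptyset)}{h(\mathcal V)-h(\Delta_{l-1})},
\]
as claimed. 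The only genuinely delicate step is the per-step progress inequality: it is the one place where monotonicity, the diminishing-returns property and the greedy selection rule must all be combined, and it is also where care is needed about exactly which set enters the union $\Delta^\star\cup\Delta_{i-1}$ and about how the stopping index $l$ relates to $\Delta_{l-1}$; everything afterward is a one-line recursion and an algebraic substitution.
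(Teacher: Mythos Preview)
Your argument is correct and is precisely the classical Wolsey analysis the paper is invoking. Note, however, that the paper does not actually prove Fact~\ref{fact}: it states it as a known result drawn from~\cite{citeulike:416650} and uses it as a black box in the proof of Theorem~\ref{th:minimal}. So there is no ``paper's own proof'' to compare against; what you have written is exactly the standard proof behind the cited reference, with the per-step progress inequality (via monotonicity, diminishing returns of $-h$, and greedy maximality) followed by the geometric-decay recursion on $g_i=h(\Delta_i)-h(\mathcal V)$. Your handling of the boundary issues---feasibility giving $h(\mathcal V)\le E$ so that $g_{l-1}>0$, and the passage from $(l-1)/k\le\log(g_0/g_{l-1})$ to $l/k\le 1+\log(g_0/g_{l-1})$ via $k\ge 1$---is clean.
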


{In Section \ref{sec:Min_N}, we provide an efficient approximation algorithm for \eqref{pr:min_set}, by applying Fact \ref{fact} to an appropriately perturbed version of this problem, so that it involves a non-decreasing supermodular function, as in \eqref{pr:X}.}
{This also leads to our second main contribution, presented in Section \ref{sec:Min_E}:  An efficient approximation algorithm for Problem \eqref{pr:original}, which selects all the actuators to maximize the volume of the set of states that
can be reached with one unit or less of input energy, without assuming controllability beforehand.  This is in contrast to the related works \cite{summers2016submodularity} and \cite{7039832}: there, the authors consider a similar problem for choosing a few actuators to optimize
$\log\det({W}_{(\cdot)}^{-1})$; however, their results rely on the assumption of a pre-existing actuator set that renders~\eqref{eq:dynamics} controllable, although this set is not selected towards the minimization of $\log\det({W}_{(\cdot)}^{-1})$.  Nevertheless, this assumption is necessary, since they then prove that the $\log\det({W}_{(\cdot)}^{-1})$ is a supermodular function in the choice of the extra actuators.}
{On the other hand, our algorithms select all the actuators towards the involved energy objective, since they rely on a $\epsilon$-perturbed version of $\log\det({W}_{(\cdot)}^{-1})$, that we prove to be supermodular without assuming controllability beforehand.}


{Overall, our results supplement the existing literature by considering Problems \eqref{pr:min_set} and \eqref{pr:original} when the system is not initially controllable and by providing efficient approximation algorithms for their solution, along with worst-case performance guarantees.}

\section{Minimal Actuator Sets with Constrained Control Effort}\label{sec:Min_N}

{We present} an efficient approximation algorithm for Problem~\eqref{pr:min_set}.  To this end, we first generalize the involved energy metric to an $\epsilon$-close one that remains well-defined even when the controllability matrix is not invertible.  Next, we relax~\eqref{pr:min_set} {by introducing a new program that makes use of this metric and circumvents the restrictive controllability constraint of \eqref{pr:min_set}. Moreover, we prove that for certain values of $\epsilon$ all solutions of this auxiliary problem render the system controllable.  This fact, along with the supermodularity property of the generalized metric that we establish, leads to our proposed approximation algorithm.  The discussion of its efficiency ends the analysis of~\eqref{pr:min_set}.

\subsection{An $\epsilon$-close Auxiliary Problem}\label{subsec:Min_N_aux}

Consider the following approximation to~\eqref{pr:min_set}
\begin{equation}\tag{I$'$}\label{pr:min_set_approx}
\begin{aligned}
 \underset{\Delta \subseteq \mathcal{V}}{\text{minimize}} & \; \;  \; 
 |\Delta|\\
\text{subject to} \\
&  \log\det(\tilde{W}_\Delta+\epsilon{I})^{-1} \leq \tilde{E},
\end{aligned}
\end{equation} 
where $\tilde{W}_\Delta$ is equivalent to ${W}_\Delta/(2\lambda_{\max} (W_\mathcal{V}))$, $\lambda_{\max} (W_\mathcal{V})$ is the maximum eigenvalue of $W_\mathcal{V}$, $\tilde{E}$ is equal to $E+n\log(2\lambda_{\max}(W_\mathcal{V}))$, and $\epsilon$ is positive.  

In contrast to~\eqref{pr:min_set}, the domain of this problem consists of all subsets of $\mathcal{V}$ since $\tilde{W}_{(\cdot)}+\epsilon{I}$ is always invertible.
The $\epsilon$-closeness is evident since for any
$\Delta$ such that $(A,B(\Delta))$ is controllable $\log\det(\tilde{W}_\Delta+\epsilon{I})^{-1} \leq \tilde{E}$ becomes $\log\det({W}_\Delta^{-1}) \leq {E}$ as $\epsilon\rightarrow 0$.  Due to the definition of $\tilde{W}_\Delta$, for all $\Delta \subseteq \mathcal{V}$, all eigenvalues of $\tilde{W}_\Delta$ are at most $1/2$ \cite[Theorem 8.4.9]{bernstein2009matrix}; this property will be useful in the proof of one of our main results, in particular, Proposition \ref{prop:suf_contr}.

{In the following paragraphs, we identify an approximation algorithm for solving Problem~\eqref{pr:min_set_approx}, and correspondingly, the $\epsilon$-close, NP-hard Problem~\eqref{pr:min_set}.}

\subsection{Approximation Algorithm for Problem~\eqref{pr:min_set_approx}}\label{subsec:Min_N_alg}

We first prove that all solutions of~\eqref{pr:min_set_approx} for $0<\epsilon \leq \min\{1/2, e^{-\tilde{E}}\}$ render the system controllable, notwithstanding that no controllability constraint is imposed by this program on the choice of the actuator sets.  Moreover, we show that the involved $\epsilon$-close energy metric is supermodular with respect to the choice of actuator sets and then we present our approximation algorithm, followed by a discussion of its efficiency which ends this subsection.

\begin{myproposition}\label{prop:suf_contr}
Consider a constant $\omega>0$, $\epsilon$ such that $0< \epsilon< \min\{1/2, e^{-\omega}\}$, and any $\Delta \subseteq \mathcal{V}$: If $\log\det(\tilde{W}_\Delta+\epsilon{I})^{-1} \leq \omega$, then $(A,B(\Delta))$ is controllable.
\end{myproposition}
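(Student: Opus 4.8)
The plan is to argue by contraposition, using the characterization recalled in Section~\ref{subsec:cntr_en} that $(A,B(\Delta))$ is controllable if and only if its controllability matrix $W_\Delta$ is positive definite. Since $W_\Delta=\sum_{i=1}^n\delta_i W_i$ by~\eqref{eq:gramianTOdelta} and each $W_i\succeq 0$, we always have $W_\Delta\succeq 0$; hence $(A,B(\Delta))$ failing to be controllable is equivalent to $W_\Delta$ being singular, i.e.\ to $W_\Delta$ having $0$ as an eigenvalue.

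So the first step is to suppose that $(A,B(\Delta))$ is \emph{not} controllable and to take a unit eigenvector $v$ of $W_\Delta$ with $W_\Delta v=0$. Then $(W_\Delta+\epsilon I)v=\epsilon v$, so $\epsilon$ is an eigenvalue of $W_\Delta+\epsilon I$, and, since that matrix is invertible, $1/\epsilon$ is an eigenvalue of $(W_\Delta+\epsilon I)^{-1}$.

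The second step is a spectral trace bound. Because $W_\Delta+\epsilon I\succeq \epsilon I\succ 0$, the matrix $(W_\Delta+\epsilon I)^{-1}$ is positive definite, so its trace equals the sum of its $n$ eigenvalues, each of which is strictly positive. Consequently $\text{tr}\big((W_\Delta+\epsilon I)^{-1}\big)\ge 1/\epsilon$, with strict inequality once $n\ge 2$ since the remaining eigenvalues contribute a positive amount. Using the hypothesis $0<\epsilon\le 1/\omega$, equivalently $1/\epsilon\ge\omega$, we obtain $\text{tr}\big((W_\Delta+\epsilon I)^{-1}\big)\ge 1/\epsilon\ge\omega$, which contradicts the assumed bound $\text{tr}\big((W_\Delta+\epsilon I)^{-1}\big)\le\omega$. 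Therefore $(A,B(\Delta))$ must be controllable.

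The argument is short, and there is no serious obstacle: everything reduces to the Gramian/controllability equivalence quoted earlier together with the shift of the spectrum by $\epsilon$. The only point that deserves a moment of care is the comparison between the trace and $1/\epsilon$ — one should note that every eigenvalue of $(W_\Delta+\epsilon I)^{-1}$ is \emph{strictly} positive, so that the trace dominates the single eigenvalue $1/\epsilon$ — and the elementary translation of $0<\epsilon\le 1/\omega$ into $1/\epsilon\ge\omega$; the rest is routine.
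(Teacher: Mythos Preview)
Your argument is correct and is essentially the same as the paper's: both proceed by contraposition, use that non-controllability forces $W_\Delta$ to have a zero eigenvalue, and then observe that the resulting $1/\epsilon$ term in $\text{tr}\big((W_\Delta+\epsilon I)^{-1}\big)$ already forces the trace to be at least $1/\epsilon\ge\omega$. The only cosmetic difference is that the paper sums over all $n-k\ge 1$ zero eigenvalues to write $(n-k)/\epsilon$ rather than isolating a single one, but the logic and the edge cases are identical.
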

\begin{proof}
Assume that $(A,B(\Delta))$ is not controllable and let $k$ be the corresponding number of non-zero eigenvalues of ${W}_\Delta$ which we denote as $\lambda_1, \lambda_2, \ldots, \lambda_k$; therefore, $k \leq n-1$.  Then,
\begin{align}
\begin{split}
&\log\det(\tilde{W}_\Delta+\epsilon{I})^{-1}=\sum_{i=1}^{k}\log\frac{1}{\frac{\lambda_i}{2\lambda_{\max}(W_\mathcal{V})}+\epsilon}\\
&\hspace{28mm}\qquad\qquad+(n-k)\log\frac{1}{\epsilon}>\log \frac{1}{\epsilon}> \omega, \nonumber
\end{split}
\end{align} since $\frac{\lambda_i}{2\lambda_{\max}(W_\mathcal{V})}+\epsilon < 1$ (because $\frac{\lambda_i}{2\lambda_{\max}(W_\mathcal{V})}\leq 1/2$ and $\epsilon<1/2$), and $\epsilon < e^{-\omega}$.  Therefore, we have a contradiction.
\end{proof}

Note that $\omega$ is chosen independently of the parameters of system~\eqref{eq:dynamics}.  Therefore,  the absence of the controllability constraint in Problem~\eqref{pr:min_set_approx} for $0<\epsilon \leq \min\{1/2, e^{-\tilde{E}}\}$ is fictitious; nonetheless, it obviates the necessity of considering only actuator sets that render the system controllable.

The next proposition is also essential and suggests an efficient approximation algorithm for solving~\eqref{pr:min_set_approx}.

\begin{myproposition}[Supermodularity]\label{prop:subm}
The function $\log\det(\tilde{W}_\Delta+\epsilon{I})^{-1}: \Delta \subseteq \mathcal{V} \mapsto \mathbb{R}$ is
supermodular and non-increasing set with respect to the choice of $\Delta$.
\end{myproposition}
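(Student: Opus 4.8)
The plan is to reduce the supermodularity of $\Delta \mapsto \operatorname{tr}(W_\Delta + \epsilon I)^{-1}$ to an elementary monotonicity statement about the matrix map $M \mapsto \operatorname{tr}(M^{-1})$ on the cone of positive definite matrices, using the additive decomposition \eqref{eq:gramianTOdelta}, namely $W_\Delta = \sum_{i \in \Delta} W_i$ with each $W_i \succeq 0$. Concretely, fix $\Delta \subseteq \Delta' \subseteq \mathcal{V}$ and $a \notin \Delta'$; to prove supermodularity I must show
\[
\operatorname{tr}(W_\Delta+\epsilon I)^{-1}-\operatorname{tr}(W_{\Delta\cup\{a\}}+\epsilon I)^{-1} \;\leq\; \operatorname{tr}(W_{\Delta'}+\epsilon I)^{-1}-\operatorname{tr}(W_{\Delta'\cup\{a\}}+\epsilon I)^{-1}.
\]
Writing $X \equiv W_\Delta+\epsilon I \succ 0$, $Y \equiv W_{\Delta'}+\epsilon I \succ 0$, and $D \equiv W_a \succeq 0$, we have $X \preceq Y$ (since $\Delta \subseteq \Delta'$ and the $W_i$ are PSD), and the inequality becomes
\[
\operatorname{tr}(X^{-1}) - \operatorname{tr}\bigl((X+D)^{-1}\bigr) \;\leq\; \operatorname{tr}(Y^{-1}) - \operatorname{tr}\bigl((Y+D)^{-1}\bigr).
\]
So the whole statement follows from: \emph{for $0 \prec X \preceq Y$ and any $D \succeq 0$, the ``marginal drop'' $g(X) \equiv \operatorname{tr}(X^{-1}) - \operatorname{tr}((X+D)^{-1})$ is non-increasing in $X$ in the Loewner order.}

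To prove that, I would use the integral identity $\operatorname{tr}(X^{-1}) - \operatorname{tr}((X+D)^{-1}) = \int_0^1 \frac{d}{ds}\bigl[-\operatorname{tr}((X+sD)^{-1})\bigr]\,ds = \int_0^1 \operatorname{tr}\bigl((X+sD)^{-1} D (X+sD)^{-1}\bigr)\,ds$, where I use $\frac{d}{ds}(X+sD)^{-1} = -(X+sD)^{-1}D(X+sD)^{-1}$. It therefore suffices to show that for each fixed $s \in [0,1]$ the integrand $\operatorname{tr}\bigl((X+sD)^{-1} D (X+sD)^{-1}\bigr)$ is non-increasing in $X$. Setting $Z \equiv X + sD$ and noting that $X \preceq Y \Rightarrow Z \preceq Z' \equiv Y + sD$, this reduces to showing $\operatorname{tr}(Z^{-1} D Z^{-1}) \geq \operatorname{tr}(Z'^{-1} D Z'^{-1})$ whenever $0 \prec Z \preceq Z'$ and $D \succeq 0$. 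Writing $D = \sum_j d_j v_j v_j^T$ with $d_j \geq 0$, linearity reduces this further to the rank-one case $\operatorname{tr}(Z^{-1} v v^T Z^{-1}) = v^T Z^{-2} v \geq v^T Z'^{-2} v$, i.e. to the operator antitonicity of $Z \mapsto Z^{-2}$: $Z \preceq Z' \Rightarrow Z'^{-2} \preceq Z^{-2}$. This is standard — $Z \mapsto Z^{-1}$ is operator antitone, and $t \mapsto t^2$ is operator monotone on $(0,\infty)$, so their composition $Z \mapsto (Z^{-1})^2 = Z^{-2}$ is operator antitone — and it closes the argument; alternatively one can invoke directly that $Z \mapsto Z^{-p}$ is operator antitone for $p \in (0,1]$ applied twice, or cite \cite{bernstein2009matrix}.

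The main obstacle is purely expository: one must be careful that each reduction step is valid for PSD (not just PD) $D$, and that the differentiation under the integral sign and the Loewner-order manipulations are justified on the open cone $X \succ 0$ — which is guaranteed here because the $+\epsilon I$ term keeps every matrix strictly positive definite along the whole segment $\{X + sD : s \in [0,1]\}$. No analytic subtlety beyond that arises; the only ``hard'' ingredient, operator antitonicity of $Z \mapsto Z^{-2}$, is a textbook fact. An alternative, slightly slicker route avoids the integral: one can directly expand, for $D \succeq 0$, $\operatorname{tr}(X^{-1}) - \operatorname{tr}((X+D)^{-1})$ via the push-through / Woodbury-type identity and exhibit it as $\operatorname{tr}$ of something manifestly antitone in $X$; but the integral formulation above is the cleanest to write and I would present that one.
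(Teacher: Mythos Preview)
Your approach mirrors the paper's almost exactly: both arguments interpolate along a PSD direction (the paper along $W_{\Delta'\setminus\Delta}$, you along $W_a$) and reduce supermodularity to the pointwise claim that $Z\mapsto\operatorname{tr}(Z^{-1}CZ^{-1})$ is Loewner-antitone in $Z$ for fixed $C\succeq 0$, which is then argued via $Z\preceq Z'\Rightarrow Z'^{-2}\preceq Z^{-2}$. Two issues deserve comment.

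First, there is a sign inconsistency in your setup. Your displayed target inequality has $\leq$, but the property you then invoke---that $g(X)=\operatorname{tr}(X^{-1})-\operatorname{tr}((X+D)^{-1})$ is \emph{non-increasing} in $X$---gives $g(X)\geq g(Y)$ for $X\preceq Y$, the opposite direction. The latter is in fact what supermodularity requires (and what the paper's proof establishes), so your displayed $\leq$ should read $\geq$; fortunately the remainder of your argument is aligned with the correct direction.

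Second, and more substantively, the step ``$t\mapsto t^2$ is operator monotone on $(0,\infty)$, so $Z'^{-1}\preceq Z^{-1}\Rightarrow Z'^{-2}\preceq Z^{-2}$'' is false: squaring is \emph{not} operator monotone. For instance, with $Z=\left(\begin{smallmatrix}2&0\\0&1\end{smallmatrix}\right)$ and $Z'=\left(\begin{smallmatrix}3&1\\1&2\end{smallmatrix}\right)$ one has $Z\preceq Z'$, yet $Z^{-2}-Z'^{-2}=\left(\begin{smallmatrix}1/20&1/5\\1/5&3/5\end{smallmatrix}\right)$ has negative determinant and is therefore indefinite. The paper makes the identical leap (``$L_2^{-1}\succeq L_1^{-1}$, and as a result $L_2^{-2}\succeq L_1^{-2}$,'' citing \cite{bernstein2009matrix}), so the gap is shared; but it is a genuine gap. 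Your proposed chain ``$Z\mapsto Z^{-1}$ is operator antitone, then compose with $t\mapsto t^2$'' does not yield antitonicity of $Z\mapsto Z^{-2}$, and the fallback ``$Z\mapsto Z^{-p}$ is operator antitone for $p\in(0,1]$, applied twice'' does not reach $p=2$ either, since composing two antitone maps gives a monotone map, not an antitone one.
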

\begin{proof}
To prove that the $\log\det(\tilde{W}_\Delta+\epsilon{I})^{-1}$ is non-increasing, recall from \eqref{eq:gramianTOdelta}
that for any $\Delta_1\subseteq\Delta_2\subseteq [n]$, $\tilde{W}_{\Delta_1} \preceq \tilde{W}_{\Delta_2}$.  Therefore, from \cite[Theorem 8.4.9]{bernstein2009matrix},
$
\log\det(\tilde{W}_{\Delta_2}+\epsilon{I})^{-1} \preceq
\log\det(\tilde{W}_{\Delta_1}+\epsilon{I})^{-1},
$
and as a result, $\log\det(\tilde{W}_\Delta+\epsilon{I})^{-1}$ is non-increasing. 

Next, to prove that $\log\det(\tilde{W}_\Delta+\epsilon{I})^{-1}$ is a supermodular set function, recall from Section \ref{subsec:supermod} that it suffices to prove that $\log\det(\tilde{W}_\Delta+\epsilon{I})$ is a submodular one.  
In particular, recall that a function $h: 2^{[n]}\mapsto \mathbb{R}$ is submodular if and only if, for any $ a \in [n]$, the function $h_a: 2^{[n]\setminus\{a\}} \mapsto \mathbb{R}$, where $h_a(\Delta)\equiv h(\Delta\cup \{a\})-h(\Delta)$, is a non-increasing set function.  Therefore, to prove that  $h(\Delta)=\log\det(\tilde{W}_\Delta+\epsilon{I})$ is submodular, we may prove that the $h_a(\Delta)$ is a non-increasing set function.  To this end, we follow the proof of Theorem 6 in \cite{summers2016submodularity}: first, observe that
\begin{align*}
h_a(\Delta)&= \log\det(\tilde{W}_{\Delta\cup \{a\}}+\epsilon I)-\log\det(\tilde{W}_\Delta+\epsilon I)\\
&=\log\det(\tilde{W}_\Delta+\tilde{W}_{a}+\epsilon I)-\log\det(\tilde{W}_\Delta+\epsilon I).
\end{align*}
Now, for any $\Delta_1\subseteq\Delta_2\subseteq [n]$ and $z \in [0,1]$, define $\Omega(z)\equiv \epsilon I+\tilde{W}_{\Delta_1}+z(\tilde{W}_{\Delta_2}-\tilde{W}_{\Delta_1})$ and 
$
\bar{h}(z)\equiv \log\det(\Omega(z)+\tilde{W}_{a})-\log\det\left(\Omega(z)\right);
$
it is $\bar{h}(0)=h_a(\Delta_1)$ and $\bar{h}(1)=h_a(\Delta_2)$.  Moreover, since $d\log\det(\Omega(z)))/dz=\text{tr}\left(\Omega(z)^{-1} d\Omega(z)/dz\right)$ (cf. equation (43) in \cite{petersen2008matrix}),
\begin{align*}
\frac{d\bar{h}(z)}{dz}= \text{tr}[((\Omega(z)+\tilde{W}_{a})^{-1}-\Omega(z)^{-1})O_{21}],
\end{align*}
where $O_{21}\equiv \tilde{W}_{\Delta_2}-\tilde{W}_{\Delta_1}$.  From \cite[Proposition 8.5.5]{bernstein2009matrix}, 
$
(\Omega(z)+\tilde{W}_{a})^{-1} \preceq \Omega(z)^{-1},
$
because $\Omega(z)\succ 0$ for all $z \in [0,1]$, since $\epsilon I \succ 0$, $\tilde{W}_{\Delta_1} \succeq 0$, and $\tilde{W}_{\Delta_2}\succeq \tilde{W}_{\Delta_1}$.  Thereby, from \cite[ Corollary 8.3.6]{bernstein2009matrix}, all eigenvalues of
$
((\Omega(z)+\tilde{W}_{a})^{-1}-\Omega(z)^{-1})O_{21}$ are non-positive.
As a result, ${d\bar{h}(z)}/{dz}\leq 0$, and
\begin{align*}
h_a(\Delta_2)=\bar{h}(1)=\bar{h}(0)+\int_{0}^{1}\frac{d\bar{h}(z)}{dz}dz\leq \bar{h}(0)=h_a(\Delta_1).
\end{align*}
Therefore, $h_a(\Delta)$ is a non-increasing set function, and the proof is complete.
\end{proof}

{Therefore, the hardness of the $\epsilon$-close Problem \eqref{pr:min_set} is in agreement with that of the class of minimum set-covering problems subject to submodular constraints.}  Inspired by this literature~\cite{Nemhauser:1988:ICO:42805, citeulike:416650,krause2012submodular}, we have the following efficient approximation algorithm for Problem~\eqref{pr:min_set_approx}, and as we show by the end of this section, for Problem~\eqref{pr:min_set} as well.

\begin{algorithm}
\caption{Approximation Algorithm for the Problem~\eqref{pr:min_set_approx}.}\label{alg:minimal-leaders}
\begin{algorithmic}
\REQUIRE Bound $\tilde{E}$, parameter $\epsilon \leq \min\{1/2,e^{-\tilde{E}}\}$, matrices ${W}_1, {W}_2, \ldots,$ ${W}_n$.
\ENSURE Actuator set $\Delta$.
\STATE $\Delta\leftarrow\emptyset$
\WHILE {$\log\det(\tilde{W}_\Delta+\epsilon{I})^{-1} > \tilde{E} $} \STATE{ 	
    $a_i {\leftarrow a'\in} \arg\max_{a \in \mathcal{V}\setminus \Delta}\{
	\log\det(\tilde{W}_\Delta+\epsilon{I})^{-1}-\log\det(\tilde{W}_{\Delta\cup \{a\}}+\epsilon{I})^{-1}
	\}$\\
	\qquad\hspace{-1mm} $\Delta \leftarrow \Delta \cup \{a_i\}$	
	}
\ENDWHILE
\end{algorithmic} 
\end{algorithm}

Regarding the quality of Algorithm~\ref{alg:minimal-leaders} the following is true.

\begin{mytheorem}[A Submodular Set Coverage Optimization]\label{th:minimal}
Denote as $\Delta^\star$ a solution to Problem~\eqref{pr:min_set_approx} and as $\Delta$ the selected set by Algorithm~\ref{alg:minimal-leaders}.  Then,
\begin{align}
&(A,B(\Delta)) \text{ is controllable},\label{explain:th:minimal2}\\
&\log\det(\tilde{W}_\Delta+\epsilon{I})^{-1} \leq \tilde{E},\label{explain:th:minima3}\\
&\frac{|\Delta|}{|\Delta^\star|}\leq 1+\log \frac{n\log(\epsilon^{-1})-\log\det(\tilde{W}_\mathcal{V}+\epsilon{I})^{-1}}{\tilde{E}-\log\det(\tilde{W}_\mathcal{V}+\epsilon{I})^{-1}}\equiv F, \label{explain:th:minimal1}\\
&F = O(\log n+\log\log(\epsilon^{-1})+\log \frac{1}{\tilde{E}-\log\det(\tilde{W}_\mathcal{V}^{-1})}).
\label{explain:approx_error0}
\end{align}
{Finally, the computational complexity of Algorithm \ref{alg:minimal-leaders} is $O(n^5)$.}
\end{mytheorem}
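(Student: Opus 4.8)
The plan is to reduce the theorem to three ingredients already in place: the controllability certificate of Proposition~\ref{prop:suf_contr}, the supermodularity of Proposition~\ref{prop:subm}, and the greedy set-cover guarantee of Fact~\ref{fact}. First I would check that Algorithm~\ref{alg:minimal-leaders} is well defined and halts: each pass through the loop appends to $\Delta$ one index of the nonempty set $\mathcal{V}\setminus\Delta$, so after at most $n$ passes $\Delta=\mathcal{V}$; and since \eqref{eq:gramianTOdelta} gives $W_\Delta\preceq W_\mathcal{V}$ for every $\Delta$, the map $\Delta\mapsto\text{tr}(W_\Delta+\epsilon I)^{-1}$ attains its minimum at $\Delta=\mathcal{V}$, so the existence of a feasible $\Delta^\star$ for \eqref{pr:min_set_approx} forces $\text{tr}(W_\mathcal{V}+\epsilon I)^{-1}\le E$ and hence the loop guard must eventually fail. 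On exit $\text{tr}(W_\Delta+\epsilon I)^{-1}\le E$, which is \eqref{explain:th:minima3}; and since $0<\epsilon\le 1/E$, Proposition~\ref{prop:suf_contr} applied with $\omega=E$ gives \eqref{explain:th:minimal2}, i.e. $(A,B(\Delta))$ is controllable.

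For the approximation ratio \eqref{explain:th:minimal1} I would invoke Fact~\ref{fact} with $h(\Delta)\equiv\text{tr}(W_\Delta+\epsilon I)^{-1}$, which is supermodular by Proposition~\ref{prop:subm} and monotone in $\Delta$ by \eqref{eq:gramianTOdelta} together with antitonicity of inversion on positive definite matrices; Algorithm~\ref{alg:minimal-leaders} is precisely Algorithm~\ref{alg:X} run on this $h$ and bound $E$, so letting $\Delta_0,\Delta_1,\ldots$ be its iterates and $l$ the least index with $h(\Delta_l)\le E$ we have $\Delta=\Delta_l$, $|\Delta|=l$, and $h(\Delta_{l-1})>E$. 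Plugging $h(\emptyset)=\text{tr}(\epsilon I)^{-1}=n\epsilon^{-1}$ and $h(\mathcal{V})=\text{tr}(W_\mathcal{V}+\epsilon I)^{-1}$ into the bound of Fact~\ref{fact}, rearranging signs, and then replacing $h(\Delta_{l-1})$ by the strictly smaller $E$ in the denominator — valid because the numerator $n\epsilon^{-1}-\text{tr}(W_\mathcal{V}+\epsilon I)^{-1}$ and both denominators $h(\Delta_{l-1})-\text{tr}(W_\mathcal{V}+\epsilon I)^{-1}$ and $E-\text{tr}(W_\mathcal{V}+\epsilon I)^{-1}$ are positive (the last by strict feasibility) — monotonicity of $\log$ yields
\[
\frac{|\Delta|}{|\Delta^\star|}\;\le\;1+\log\frac{n\epsilon^{-1}-\text{tr}(W_\mathcal{V}+\epsilon I)^{-1}}{E-\text{tr}(W_\mathcal{V}+\epsilon I)^{-1}}=F .
\]

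The order estimate \eqref{explain:approx_error0} is then two elementary bounds: $n\epsilon^{-1}-\text{tr}(W_\mathcal{V}+\epsilon I)^{-1}\le n\epsilon^{-1}$, and $E-\text{tr}(W_\mathcal{V}+\epsilon I)^{-1}\ge E-\text{tr}(W_\mathcal{V}^{-1})$ because $W_\mathcal{V}\succ 0$ (it is the controllability Gramian of $(A,I)$) and $W_\mathcal{V}+\epsilon I\succeq W_\mathcal{V}$ give $\text{tr}(W_\mathcal{V}+\epsilon I)^{-1}\le\text{tr}(W_\mathcal{V}^{-1})$; hence $F\le 1+\log n+\log\epsilon^{-1}+\log\frac{1}{E-\text{tr}(W_\mathcal{V}^{-1})}$. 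For the running time, the loop executes at most $n$ times, and each iteration evaluates, for at most $n$ candidates $a$, the marginal $\text{tr}(W_\Delta+\epsilon I)^{-1}-\text{tr}(W_{\Delta\cup\{a\}}+\epsilon I)^{-1}$; maintaining $W_\Delta+\epsilon I$, each such evaluation costs one $n\times n$ matrix addition plus one $n\times n$ inversion-and-trace, i.e. $O(n^3)$, so the total is $O(n)\cdot O(n)\cdot O(n^3)=O(n^5)$.

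I expect the only real subtlety to be the feasibility/termination bookkeeping — recording that the standing hypotheses force $E>\text{tr}(W_\mathcal{V}+\epsilon I)^{-1}$, which is what makes the loop exit and $F$ finite. Given Propositions~\ref{prop:suf_contr} and~\ref{prop:subm}, the approximation bound itself is a direct specialization of Fact~\ref{fact}, and the remaining items are routine trace inequalities and an iteration count.
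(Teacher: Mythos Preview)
Your proposal is correct and follows essentially the same approach as the paper: termination plus Proposition~\ref{prop:suf_contr} for \eqref{explain:th:minimal2}--\eqref{explain:th:minima3}, Fact~\ref{fact} specialized via Proposition~\ref{prop:subm} for \eqref{explain:th:minimal1}, the trace inequalities $\text{tr}(W_\mathcal{V}+\epsilon I)^{-1}<\text{tr}(W_\mathcal{V}^{-1})$ and $n\epsilon^{-1}-\text{tr}(W_\mathcal{V}+\epsilon I)^{-1}\le n\epsilon^{-1}$ for \eqref{explain:approx_error0}, and the same $n\cdot n\cdot O(n^3)$ count for the complexity. The only cosmetic difference is that the paper applies Fact~\ref{fact} to the shifted negation $h(\Delta)=-\text{tr}(W_\Delta+\epsilon I)^{-1}+n\epsilon^{-1}$ rather than to $\text{tr}(W_\Delta+\epsilon I)^{-1}$ directly, which yields the identical bound after the same sign rearrangement you describe.
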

\begin{proof}
We first prove~\eqref{explain:th:minima3},~\eqref{explain:th:minimal1} and~\eqref{explain:approx_error0}, and then, \eqref{explain:th:minimal2}.  {We end the proof by clarifying the computational complexity of Algorithm \ref{alg:minimal-leaders}.} 

First, let $\Delta_0, \Delta_1, \ldots$ be the sequence of sets selected by Algorithm~\ref{alg:minimal-leaders} and $l$ the smallest index such that $\log\det(\tilde{W}_{\Delta_l}+\epsilon{I})^{-1}  \leq E$.  Therefore, $\Delta_l$ is the set that Algorithm~\ref{alg:minimal-leaders} returns, and this proves~\eqref{explain:th:minima3}.

Moreover, from~\cite{citeulike:416650}, since for any $\Delta \subseteq \mathcal{V}$, $h(\Delta)\equiv-\log\det(\tilde{W}_\Delta+\epsilon{I})^{-1} +n\log(\epsilon^{-1})$ is a non-negative, non-decreasing, and submodular function (cf.~Proposition~\ref{prop:subm}), it is guaranteed for Algorithm~\ref{alg:minimal-leaders} that (cf.~Fact \ref{fact})
\begin{align*}
\frac{l}{|\Delta^\star|}&\leq 1+\log \frac{h(\mathcal{V})-h(\emptyset)}{h(\mathcal{V})-h(\Delta_{l-1})}\\
&=1+\\
&\log \frac{n\log(\epsilon^{-1})-\log\det(\tilde{W}_\mathcal{V}+\epsilon{I})^{-1}}{\log\det(\tilde{W}_{\Delta_{l-1}}+\epsilon{I})^{-1}-\log\det(\tilde{W}_\mathcal{V}+\epsilon{I})^{-1}}.
\end{align*}
Now, $l$ is the first time that $\log\det(\tilde{W}_{\Delta_l}+\epsilon{I})^{-1} \leq \tilde{E}$, and a result $\log\det(\tilde{W}_{\Delta_{l-1}}+\epsilon{I})^{-1} > \tilde{E}$.  This implies~\eqref{explain:th:minimal1}.  

Moreover, observe that $0<\log\det(\tilde{W}_\mathcal{V}+\epsilon{I})^{-1}<\log\det(\tilde{W}_\mathcal{V}^{-1})$ so that from \eqref{explain:th:minimal1} we get $F \leq 1+\log [n\log(\epsilon^{-1})/(\tilde{E}-\log\det(\tilde{W}_\mathcal{V}^{-1}))]$,
which in turn implies~\eqref{explain:approx_error0}.

On the other hand, since $0<\epsilon \leq \min\{1/2,e^{-\tilde{E}}\}$ and $\log\det(\tilde{W}_{\Delta_l}+\epsilon{I})^{-1} \leq \tilde{E}$, Proposition~\ref{prop:suf_contr} is in effect, i.e.,~\eqref{explain:th:minimal2} holds true.

{Finally, with respect to the computational complexity of Algorithm \ref{alg:minimal-leaders}, note that the \texttt{while} loop is repeated for at most $n$ times.  Moreover, the complexity to compute the determinant an $n \times n$ matrix, using Gauss-Jordan elimination decomposition, is $O(n^3)$. Additionally, at most $n$ matrices must be inverted so that the ``$\arg\max_{a \in \mathcal{V}\setminus \Delta}\{
	\log\det(\tilde{W}_\Delta+\epsilon{I})^{-1}-\log\det(\tilde{W}_{\Delta\cup \{a\}}+\epsilon{I})^{-1}
	\}$'' can be computed.  Furthermore, $O(n)$ time is required to find a maximum element between $n$ available.  Therefore, the computational complexity of Algorithm \ref{alg:minimal-leaders} is $O(n^5)$.}
\end{proof}

Therefore, Algorithm~\ref{alg:minimal-leaders} returns a set of actuators that meets the corresponding control energy bound of Problem~\eqref{pr:min_set_approx} while it renders system~\eqref{eq:dynamics} controllable.  Moreover, the cardinality of this set is up to a multiplicative factor of $F$ from the minimum cardinality actuator sets that meet the same control energy bound. 

The dependence of $F$ on $n,\epsilon$ and $E$ was expected from a design perspective: Increasing the network size $n$ or improving the accuracy by decreasing $\epsilon$, as well as demanding a better energy guarantee by decreasing $E$ should all push the cardinality of the selected actuator set upwards.  Also, $\log\log(\epsilon^{-1})$ is the design cost for circumventing the difficult to satisfy controllability constraint of~\eqref{pr:min_set}~\cite{2013arXiv1304.3071O}, i.e., {for assuming no pre-existing actuators that renders~\eqref{eq:dynamics} controllable and choosing all the actuators towards the satisfaction of an energy performance criterion}.

{From a computational perspective, the computation of the determinant is the only intensive procedure of Algorithm \ref{alg:minimal-leaders}, requiring $O(n^3)$ time, if we use the Gauss-Jordan elimination decomposition.  On the other hand, to apply this algorithm on large-scale systems, we can speed up this procedure using the Coppersmith-Winograd algorithm \cite{coppersmith1987matrix}, which requires $O(n^{2.376})$ time. Alternatively, we can use numerical methods, which efficiently compute an approximate the determinant of a matrix even if its size is of several thousands \cite{Reusken:2001:ADL:587707.587812}.  Moreover, we can speed up Algorithm \ref{alg:minimal-leaders} using a method proposed in \cite{minoux1978accelerated}, which avoids the computation of $\log\det(\tilde{W}_\Delta+\epsilon{I})^{-1}-\log\det(\tilde{W}_{\Delta\cup \{a\}}+\epsilon{I})^{-1}$ for unnecessary choices of $a$, towards the computation of the $\arg\max_{a \in \mathcal{V}\setminus \Delta}\{
	\log\det(\tilde{W}_\Delta+\epsilon{I})^{-1}-\log\det(\tilde{W}_{\Delta\cup \{a\}}+\epsilon{I})^{-1}
	\}$, by taking advantage of the supermodularity of $\log\det(\tilde{W}_{(\cdot)}+\epsilon{I})^{-1}$.}

{Finally, for large values of $n$, the computation of ${W}_1, {W}_2, \ldots,$ ${W}_n$ is demanding as well.  On the other hand, in the case of stable systems, as many physical, e.g., biological, networks are, the corresponding controllability Gramians can be used instead, which for a stable system can be calculated from the Lyapunov equations ${A}{G_i} + {G_i}{A}^{T} = -I^{(i)}$, for $i=1,2,\ldots,n$, respectively, and are given in closed-form by
\begin{align}
{G_i} = \int_{t_0}^{\infty} \mathrm{e}^{{A}(t-t_0)} I^{(i)} \mathrm{e}^{{A}^{T}(t-t_0)}\,\mathrm{d}{t}. \label{eq:lyap}
\end{align}
Using these Gramians for the evaluation of $W$ in \eqref{eq:min_energy} corresponds to the minimum state transfer energy with no time constraints.  The advantage of this approach is that \eqref{eq:lyap} can be solved efficiently using numerical methods, even when the system's size $n$ has a value of several thousands \cite{benner2008numerical}.}

In Section~\ref{subsec:ApproximationAlgorithm} we finalize our treatment of Problem~\eqref{pr:min_set} by employing Algorithm~\ref{alg:minimal-leaders} to approximate its solutions.

\subsection{Approximation Algorithm for Problem~\eqref{pr:min_set}}\label{subsec:ApproximationAlgorithm}

We present an efficient approximation algorithm for Problem~\eqref{pr:min_set} that is based on Algorithm~\ref{alg:minimal-leaders}.  Let $\Delta$ be the actuator set returned by Algorithm~\ref{alg:minimal-leaders}, so that $(A,B(\Delta))$ is controllable and $\log\det(\tilde{W}_{\Delta}+\epsilon{I})^{-1}\leq \tilde{E}$.   For any $c>0$, there exists sufficiently small $\epsilon(c)$ such that: 
\begin{align}
\log\det(\tilde{W}_{\Delta}+\epsilon(c) I)^{-1}&\geq \log\det(\tilde{W}_{\Delta}^{-1})-c\tilde{E}.\label{eq:aux_3}
\end{align}
 Moreover, $\log\det(\tilde{W}_{\Delta}+\epsilon(c) I)^{-1}\leq \tilde{E}$, and therefore we get from~\eqref{eq:aux_3} that $\log\det(\tilde{W}_{\Delta}^{-1}) \leq (1+c)\tilde{E}$, or
\begin{align}
\log\det({W}_{\Delta}^{-1}) \leq E+c\tilde{E}. \label{eq:approx_error}
\end{align} Hence, we refer to $c$ as \textit{approximation error}.  

On the other hand, $\epsilon(c)$ is not known a priori.   Hence, we need to search for a sufficiently small $\epsilon$ so that~\eqref{eq:approx_error} holds true.  One way to achieve this since $\epsilon$ is lower and upper bounded by $0$ and $\min\{1/2,e^{-\tilde{E}}\}$, respectively, is to perform a search using bisection.  We implement this procedure in Algorithm~\ref{alg:minimal-leaders_final}, where we denote as  $[\text{Algorithm}~\ref{alg:minimal-leaders}](\tilde{E},\epsilon)$ the set that Algorithm~\ref{alg:minimal-leaders} returns for given $\tilde{E}$ and $\epsilon$.

\begin{algorithm}[h]
\caption{Approximation Algorithm for the Problem~\eqref{pr:min_set}.}\label{alg:minimal-leaders_final}
\begin{algorithmic}
\REQUIRE Bound ${E}$, approximation error $c$, bisection's initial accuracy level $a_0$, matrices ${W}_1, {W}_2, \ldots, {W}_n$.
\ENSURE Actuator set $\Delta$.
\STATE $a \leftarrow a_0$, $\text{flag}\leftarrow 0$, $l\leftarrow 0$, $u\leftarrow \min\{1/2,e^{-\tilde{E}}\}$, $\epsilon\leftarrow(l+u)/2$
\WHILE {$\text{flag} \neq 1$}
	\WHILE {$u-l>a$}\\	
	\quad $\Delta \leftarrow [\text{Algorithm}~\ref{alg:minimal-leaders}](\tilde{E},\epsilon)$
		\IF {$\log\det(\tilde{W}_{\Delta}^{-1})- \log\det(\tilde{W}_{\Delta}+\epsilon{I})^{-1}> c\tilde{E}$} \STATE{$u\leftarrow \epsilon$} \ELSE \STATE{$l\leftarrow \epsilon$} 
		\ENDIF\\
	\quad $\epsilon\leftarrow (l+u)/2$   
	\ENDWHILE
\IF {$\log\det(\tilde{W}_{\Delta}^{-1})- \log\det(\tilde{W}_{\Delta}+\epsilon{I})^{-1}> c\tilde{E}$} \STATE{$u\leftarrow \epsilon$}, $\epsilon\leftarrow (l+u)/2$
\ENDIF\\
$\Delta \leftarrow [\text{Algorithm}~\ref{alg:minimal-leaders}](\tilde{E},\epsilon)$\label{exit_step}
\IF {$\log\det(\tilde{W}_{\Delta}^{-1})- \log\det(\tilde{W}_{\Delta}+\epsilon{I})^{-1} \leq c\tilde{E}$} \STATE{$\text{flag} \leftarrow 1$} \ELSE \STATE {$a\leftarrow a/2$}
\ENDIF
\ENDWHILE
\end{algorithmic} 
\end{algorithm}

In the worst case, when we first enter the inner \texttt{while} loop, the \texttt{if} condition is not satisfied, and as a result $\epsilon$ is set to a lower value.  This process continues until the \texttt{if} condition is satisfied for the first time{, given that $a_0$ is sufficiently small for the specified $c$,} from which point and on this \texttt{while} loop converges up to the accuracy level $a_0$ to the largest value $\bar{\epsilon}$ of $\epsilon$ such that $\log\det(\tilde{W}_{\Delta}^{-1})- \log\det(\tilde{W}_{\Delta}+\epsilon{I})^{-1} \leq c\tilde{E}$; specifically, $|\epsilon-\bar{\epsilon}| \leq a_0/2$, due to the mechanics of the bisection method.  {On the other hand, if $a_0$ is not sufficiently small, the value of $a$ decreases within the last \texttt{if} statement of the algorithm, the variable $\text{flag}$ remains zero and the outer loop is executed again, until the convergence within the inner \texttt{while} is feasible. Then,} the \texttt{if} statement that follows the inner \texttt{while} loop ensures that $\epsilon$ is set below $\bar{\epsilon}$, so that $\log\det(\tilde{W}_{\Delta}^{-1})- \log\det(\tilde{W}_{\Delta}+\epsilon{I})^{-1} \leq c\tilde{E}$. Finally, the last \texttt{if} statement sets the $\text{flag}$ to $1$ and the algorithm terminates.  The efficiency of this algorithm for Problem~\eqref{pr:min_set} is summarized below.

\begin{mytheorem}[Approximation Efficiency and Computational Complexity of Algorithm~\ref{alg:minimal-leaders_final} for Problem~\eqref{pr:min_set}]\label{th:minimal_set_main}
Denote as $\Delta^\star$ a solution to Problem~\eqref{pr:min_set} and as $\Delta$ the selected set by Algorithm~\ref{alg:minimal-leaders_final}.  Then,
\begin{align}
&(A,B(\Delta)) \text{ is controllable},\nonumber\\
&\log\det({W}_{\Delta}^{-1}) \leq E+c\tilde{E}, \label{eq:cor_1_1}\\ 
&\frac{|\Delta|}{|\Delta^\star|}\leq  F,\label{eq:cor_1_2}\\
&F = O(\log n+ \max\{\log\log(n/(c\tilde{E})),\log\tilde{E}\}+
\nonumber\\
&\hspace{40mm}\log \frac{1}{\tilde{E}-\log\det(\tilde{W}_\mathcal{V}^{-1})})\label{eq:cor_1_3}.
\end{align}
{Finally, let $a$ be the bisection's accuracy level that Algorithm \ref{alg:minimal-leaders_final} terminates with.  Then, if $a=a_0$, the computational complexity of Algorithm \ref{alg:minimal-leaders_final} is  $O(n^5\log_2(1/a_0)$, else it is $O(n^5\log_2(1/a)\log_2(a_0/a))$.}
\end{mytheorem}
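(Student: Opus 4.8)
\textbf{Proof plan for Theorem~\ref{th:minimal_set_main}.}
The plan is to reduce everything to facts already established about Algorithm~\ref{alg:minimal-leaders} (Theorem~\ref{th:minimal}) together with the bisection mechanics described in the text, and then to account separately for the two correctness properties, the approximation bound, and the complexity count. First I would fix notation: let $\epsilon$ be the value of the parameter at the moment Algorithm~\ref{alg:minimal-leaders_final} exits (the value used in the final call at step~\eqref{exit_step}), and let $\Delta=[\text{Algorithm}~\ref{alg:minimal-leaders}](E,\epsilon)$ be the returned set. Since the algorithm only sets $\text{flag}\leftarrow 1$ and terminates when $\text{tr}({W}_{\Delta}^{-1})-\text{tr}({W}_{\Delta}+\epsilon{I})^{-1}\leq cE$ holds, at termination this inequality is satisfied by construction. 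Because $0<\epsilon\leq 1/E$ throughout and Theorem~\ref{th:minimal} guarantees $\text{tr}({W}_{\Delta}+\epsilon{I})^{-1}\leq E$, Proposition~\ref{prop:suf_contr} applies with $\omega=E$, giving controllability of $(A,B(\Delta))$. Combining the termination inequality with $\text{tr}({W}_{\Delta}+\epsilon{I})^{-1}\leq E$ yields $\text{tr}({W}_{\Delta}^{-1})\leq(1+c)E$, which is~\eqref{eq:cor_1_1}.

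For the approximation ratio~\eqref{eq:cor_1_2}, the key observation is that any feasible $\Delta^\star$ for~\eqref{pr:min_set} --- i.e.\ one with $\text{tr}({W}_{\Delta^\star}^{-1})\leq E$ --- is also feasible for~\eqref{pr:min_set_approx} with the same $\epsilon$, since $\text{tr}({W}_{\Delta^\star}+\epsilon{I})^{-1}\leq\text{tr}({W}_{\Delta^\star}^{-1})\leq E$; hence the optimal value of~\eqref{pr:min_set_approx} is at most $|\Delta^\star|$, and the factor $F$ from~\eqref{explain:th:minimal1} in Theorem~\ref{th:minimal} carries over verbatim, giving $|\Delta|/|\Delta^\star|\leq F$. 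To obtain the cleaner form~\eqref{eq:cor_1_3}, I would start from~\eqref{explain:approx_error0} and substitute the bound on $\epsilon$ in terms of $c$: the exit condition forces $\epsilon$ below the threshold $\bar\epsilon$ at which $n\epsilon/\lambda_m^2({W}_\Delta)\leq cE$ becomes active (this is exactly the derivation~\eqref{eq:aux_1}--\eqref{eq:approx_error}), so $\epsilon^{-1}=\Omega\!\big(n/(cE\lambda_m^2)\big)$ and thus $\log\epsilon^{-1}=O(\log n+\log(1/c)+\log E+\dots)$; folding the $\lambda_m$-dependent constant and the $\log E$ contribution into the stated $O(\cdot)$ --- and noting $\log(E/c)=\log E+\log(1/c)$ --- reproduces~\eqref{eq:cor_1_3}. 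I would be a little careful here to state precisely which quantities are treated as constants (the dependence on $\lambda_m({W}_\Delta)$ and on the system matrix $A$ is absorbed), since this is where the bookkeeping is easiest to get wrong; this is the step I expect to be the main obstacle, because making the $\epsilon$-to-$c$ translation rigorous requires knowing that the bisection actually converges to a value at or just below $\bar\epsilon$, which in turn relies on $a_0$ (or the decreased $a$) being small enough --- a point the surrounding text argues informally.

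For the complexity claim I would count nested loops directly. One pass of the inner \texttt{while} loop halves the interval $u-l$, which starts at $1/E$, so it runs $O(\log_2(1/(aE)))$ times; each iteration makes one call to Algorithm~\ref{alg:minimal-leaders}, which by Theorem~\ref{th:minimal} costs $O(n^5)$, and the extra trace and inverse evaluations in the \texttt{if} test are dominated by this. Hence one full execution of the inner loop (plus the two trailing \texttt{if}/call steps) costs $O(n^5\log_2(1/(aE)))$. If the outer loop runs only once --- i.e.\ $a$ never gets halved, so $a=a_0$ --- the total is $O(n^5\log_2(1/(a_0E)))$, which is the first stated bound. Otherwise $a$ is repeatedly halved from $a_0$ down to its terminal value, so the outer loop runs $O(\log_2(a_0/a))$ times, each costing at most $O(n^5\log_2(1/(aE)))$ (the inner-loop length is largest at the final, smallest $a$), giving the product bound $O(n^5\log_2(1/(aE))\log_2(a_0/a))$. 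Assembling the four items --- controllability, the energy bound~\eqref{eq:cor_1_1}, the ratio~\eqref{eq:cor_1_2}--\eqref{eq:cor_1_3}, and the two complexity cases --- completes the proof.
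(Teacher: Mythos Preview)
Your proof follows essentially the same route as the paper's: controllability and the energy bound come from the termination condition combined with Theorem~\ref{th:minimal}; the ratio bound~\eqref{eq:cor_1_2} comes from the observation that feasibility for~\eqref{pr:min_set} implies feasibility for~\eqref{pr:min_set_approx}; the asymptotic form~\eqref{eq:cor_1_3} comes from bounding $\epsilon^{-1}$ via the $\lambda_m$-threshold and then using $1/\lambda_m({W}_\Delta)\leq\text{tr}({W}_\Delta^{-1})=O(E)$; and the complexity comes from direct loop counting. One small slip to fix in the asymptotic step: you write $\epsilon^{-1}=\Omega\bigl(n/(cE\lambda_m^2)\bigr)$, but the bisection converges near the \emph{largest} $\epsilon$ satisfying the exit condition, and since $n\epsilon/\lambda_m^2\leq cE$ is \emph{sufficient} for that condition you actually obtain $\epsilon^{-1}=O\bigl(n/(cE\lambda_m^2)\bigr)$ --- the direction you need for the upper bound on $\log\epsilon^{-1}$.
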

\begin{proof} We only prove statements \eqref{eq:cor_1_1}, \eqref{eq:cor_1_2} and \eqref{eq:cor_1_3}, while the first follows from Theorem~\ref{th:minimal}. {We end the proof by clarifying the computational complexity of Algorithm \ref{alg:minimal-leaders_final}.} 

First, when Algorithm~\ref{alg:minimal-leaders_final} exits the \texttt{while} loop, and after the following \texttt{if} statement, $\log\det(\tilde{W}_{\Delta}^{-1})- \log\det(\tilde{W}_{\Delta}+\epsilon{I})^{-1} \leq c\tilde{E}$, and since $\log\det(\tilde{W}_{\Delta}+\epsilon{I})^{-1}\leq \tilde{E}$, this implies~\eqref{eq:cor_1_1}.

To show~\eqref{eq:cor_1_2}, consider any solution $\Delta^\star$ to Problem~\eqref{pr:min_set} and any solution $\Delta^\bullet$ to Problem~\eqref{pr:min_set_approx}. Then, $|\Delta^\star| \geq |\Delta^\bullet|$; to see this, note that for any $\Delta^\star$, $\log\det(\tilde{W}_{\Delta^\star}+\epsilon{I})^{-1} <\log\det(\tilde{W}_{\Delta^\star}^{-1})\leq \tilde{E}$ since $\epsilon>0$, i.e., $\Delta^\star$ is a candidate solution to Problem~\eqref{pr:min_set_approx} because it satisfies all of its constraints.  Therefore, $|\Delta^\star| \geq |\Delta^\bullet|$, and as a result
$|\Delta|/|\Delta^\star|\leq |\Delta|/|\Delta^\bullet|\leq F$ per~\eqref{explain:th:minimal1}.

Next, note that~\eqref{eq:cor_1_1} holds true when, e.g., $\epsilon$ is equal to $c\tilde{E}/(2n)$.  Therefore, since also $\epsilon \leq e^{-\tilde{E}}$, $\log\log\epsilon^{-1}=O(\max\{\log\log(n/(c\tilde{E})),\log\tilde{E}\})$ and this proves~\eqref{eq:cor_1_3}.

{Finally, with respect to the computational complexity of Algorithm \ref{alg:minimal-leaders_final}, note that the inner \texttt{while} loop is repeated for at most $\log_2(1/(2a))$ times (since $\epsilon \leq 1/2$), in the worst case.  Moreover, the time complexity of the procedures within this loop is of order $O(n^5)$, due to Algorithm \ref{alg:minimal-leaders}.  Finally, if $a=a_0$, the outer \texttt{while} loop runs for one time, and otherwise, for $\log_2(a_0/a)$ times.  Therefore, the computational complexity of Algorithm \ref{alg:minimal-leaders_final} is $O(n^5\log_2(1/a_0))$, or $O(n^5\log_2(1/a)\log_2(a_0/a))$, respectively.}
\end{proof}

{From a computational perspective, we can speed up Algorithm \ref{alg:minimal-leaders_final} using the methods we discussed in the end of Section \ref{subsec:Min_N_alg}.  Moreover, for a wide class of systems, e.g., when $a=O(n^{n^{c_1}})$, where $c_1$ is a positive constant, independent of $n$, this algorithm runs in polynomial time, due to the logarithmic dependence on $a$.}

{From an approximation efficiency perspective we have that $F=O(\log(n))$, whenever $E=O(n^{c_1})$, $\lambda_{\max}(W_\mathcal{V})=O(n^{n^{c_2}})$ and $1/(\tilde{E}-\log\det(\tilde{W}_\mathcal{V}^{-1}))=O(n^{c_3})$, where $c_1$, $c_2$ and $c_3$ are positive constants and independent of $n$. In other words, the cardinality of the actuator set that Algorithm~\ref{alg:minimal-leaders_final} returns is up to a multiplicative factor of $O(\log n)$ from the minimum cardinality actuator sets that meet the same energy bound. Indeed, this is the best achievable bound in polynomial time for the set covering problem in the worst case~\cite{Feige:1998:TLN:285055.285059}, while~\eqref{pr:min_set} is a generalization of it~\cite{2013arXiv1304.3071O}.  
Thus, Algorithm~\ref{alg:minimal-leaders_final} is a best-approximation of \eqref{pr:min_set} for this class of systems.}



\section{Minimum Energy Control by a Cardinality-Constrained Actuator Set}\label{sec:Min_E}

We present an approximation algorithm for Problem~\eqref{pr:original} following a parallel line of thought as in Section~\ref{sec:Min_N}: First, we circumvent the  restrictive controllability constraint of~\eqref{pr:original} using the $\epsilon$-close generalized energy metric defined in Section~\ref{sec:Min_N}.  Then, we propose an efficient approximation algorithm for its solution that makes use of Algorithm~\ref{alg:minimal-leaders_final}; this algorithm returns an actuator set that always renders~\eqref{eq:dynamics} controllable while it guarantees a value for~\eqref{pr:original} that is provably close to its optimal one.  We end the analysis of~\eqref{pr:original} by explicating further the efficiency of this procedure.

\subsection{An $\epsilon$-close Auxiliary Problem}\label{subsec:Min_E_aux}

For $\epsilon>0$ consider the following approximation to~\eqref{pr:original}
\begin{equation}\tag{II$'$}\label{pr:original_approx}
\begin{aligned}
 \underset{\Delta \subseteq \mathcal{V}}{\text{minimize}} & \; \;  \; \log\det(\tilde{W}_\Delta+\epsilon{I})^{-1} \\
\text{subject to} \\
&  |\Delta| \leq r.
\end{aligned}
\end{equation} 
In contrast to~\eqref{pr:original}, the domain of this problem consists of all subsets of $\mathcal{V}$ since $\tilde{W}_{(\cdot)}+\epsilon{I}$ is always invertible.  Moreover, its objective is $\epsilon$-close to that of Problem~\eqref{pr:original}.  

{In the following paragraphs, we identify an efficient approximation algorithm for solving Problem~\eqref{pr:original_approx}, and correspondingly, the $\epsilon$-close, NP-hard Problem~\eqref{pr:original}.} We note that the hardness of the latter is in accordance with that of the general class of supermodular function minimization problems, as per Proposition~\ref{prop:subm} the objective $\log\det(\tilde{W}_\Delta+\epsilon{I})^{-1}$ is supermodular.  The approximation algorithms used in that literature however~\cite{Nemhauser:1988:ICO:42805, citeulike:416650,krause2012submodular}, fail to provide an efficient solution algorithm for~\eqref{pr:original_approx} --- for completeness, we discuss this direction in the Appendix~\ref{appen:badalg}.  In the next subsection we propose an efficient approximation algorithm for \eqref{pr:original} that makes use of Algorithm~\ref{alg:minimal-leaders_final}.

\subsection{Approximation Algorithm for Problem~\eqref{pr:original}}\label{subsec:Min_E_alg}

We provide an efficient approximation algorithm for Problem~\eqref{pr:original} that is based on Algorithm~\ref{alg:minimal-leaders_final}.  In particular, since~\eqref{pr:original} finds an actuator set that minimizes $\log\det({W}_{(\cdot)}^{-1})$, and any solution to~\eqref{pr:min_set} satisfies $\log\det({W}_{(\cdot)}^{-1})\leq E$, one may repeatedly execute Algorithm~\ref{alg:minimal-leaders_final} for decreasing values of ${E}$ as long as the returned actuators are at most $r$ and ${E}$ satisfies the feasibility constraint ${E}\geq \log\det({W}_\mathcal{V}^{-1})$ (cf.~Section~\ref{subsec:problems}).  Therefore, for solving~\eqref{pr:original} we propose a bisection-type execution of Algorithm~\ref{alg:minimal-leaders_final} with respect to $E$.  

To this end, we also need an upper bound for the value of~\eqref{pr:original}:  Let $\Delta_\mathcal{C}$ be a small actuator set that renders system~\eqref{eq:dynamics} controllable; it is efficiently found using Algorithm~\ref{alg:minimal-leaders_final} for large $E$ or the procedure proposed in~\cite{2013arXiv1304.3071O}.  Then, for any $r\geq |\Delta_\mathcal{C}|$,  $\log\det(\tilde{W}_{\Delta_\mathcal{C}}^{-1})$ upper bounds the value of~\eqref{pr:original} since $\log\det(\tilde{W}_{(\cdot)}^{-1})$ is monotone.  

Thus, having a lower and upper bound for the value of~\eqref{pr:original}, we implement Algorithm~\ref{alg:min_energy_2} for approximating the solutions of~\eqref{pr:original}; we consider only the non-trivial case where $r<n$ and denote the set that Algorithm~\ref{alg:minimal-leaders_final} returns as $[\text{Algorithm}~\ref{alg:minimal-leaders_final}](\tilde{E},c,a_0)$ for given $\tilde{E}$, $c$ and $a_0$.

\begin{algorithm}
\caption{Approximation algorithm for Problem~\eqref{pr:original}.}\label{alg:min_energy_2}
\begin{algorithmic}
\REQUIRE Set $\Delta_\mathcal{C}$, maximum number of actuators $r$ such that $r\geq |\Delta_\mathcal{C}|$,  approximation error $c$ for Algorithm~\ref{alg:minimal-leaders_final}, bisection's accuracy level $a_0$ for Algorithm~\ref{alg:minimal-leaders_final}, bisection's accuracy level $a_0'$ for current algorithm, matrices ${W}_1, {W}_2, \ldots, {W}_n$.
\ENSURE Actuator set $\Delta$.
\STATE $\Delta\leftarrow\emptyset$,
$l\leftarrow \log\det(\tilde{W}_{\mathcal{V}}^{-1})$, $u\leftarrow \text{tr}({W}_{\Delta_\mathcal{C}}^{-1})$,
$\tilde{E}\leftarrow (l+u)/2$, $\epsilon \leftarrow \min\{1/2,e^{-\tilde{E}}\}$ 
\WHILE {$ u-l >a_0'$}\\
    $\Delta\leftarrow [\text{Algorithm}~\ref{alg:minimal-leaders_final}](\tilde{E},c,a_0)$
	\IF {$|\Delta| > r$} \STATE{$l\leftarrow \tilde{E}$, $\tilde{E}\leftarrow (l+u)/2$} \ELSE \STATE{$u\leftarrow \tilde{E}$, $\tilde{E}\leftarrow (l+u)/2$} 
	\ENDIF\\
	$\epsilon \leftarrow 1/\tilde{E}$    
\ENDWHILE
\IF {$|\Delta| > r$} \STATE{$l\leftarrow \tilde{E}$, $\tilde{E}\leftarrow (l+u)/2$} 
\ENDIF\\
$\Delta\leftarrow [\text{Algorithm}~\ref{alg:minimal-leaders_final}](\tilde{E},c,a_0)$
\end{algorithmic} 
\end{algorithm}

In the worst case, when we first enter the \texttt{while} loop, the \texttt{if} condition is not satisfied, and as a result $\tilde{E}$ is set to a greater value.  This process continues until the \texttt{if} condition is satisfied for the first time from which point and on the algorithm converges up to the accuracy level $a_0$ to the smallest value $\underline{\tilde{E}}$ of $\tilde{E}$ such that $|\Delta| \leq r$; specifically, $|\tilde{E}-\underline{\tilde{E}}| \leq a_0'/2$ due to the mechanics of the bisection method, where $\underline{\tilde{E}}\equiv\text{min}\{\tilde{E}:|[\text{Algorithm}~\ref{alg:minimal-leaders_final}](\tilde{E},c,a_0)|\leq r\}$. Hereby $\underline{\tilde{E}}$ is the least bound $\tilde{E}$ for which Algorithm~\ref{alg:minimal-leaders_final} returns an actuator set of cardinality at most $r$ for the specified $c$ and $a_0$ --- $\underline{\tilde{E}}$ may be larger than the value of~\eqref{pr:original} due to worst-case approximability of the involved problems (cf.~Theorem~\ref{th:minimal_set_main}).  Then, Algorithm~\ref{alg:min_energy_2} exits the \texttt{while} loop and the last \texttt{if} statement ensures that $\tilde{E}$ is set below $\underline{\tilde{E}}$ so that $|\Delta| \leq r$.  Moreover, per Theorem~\ref{th:minimal_set_main} this set renders~\eqref{eq:dynamics} controllable and guarantees that $\log\det(\tilde{W}_{\Delta}^{-1}) \leq E+c\tilde{E}$.  {Finally, with respect to the computational complexity of Algorithm \ref{alg:min_energy_2}, note that the \texttt{while} loop is repeated for at most $\log_2\left[(\log\det(\tilde{W}_{\Delta_\mathcal{C}}^{-1})-\log\det(\tilde{W}_{\mathcal{V}}^{-1}))/a_0'\right]$ times.  Moreover, the time complexity of the procedures within this loop are, in the worst case, of the same order as that of Algorithm \ref{alg:minimal-leaders_final} when it is executed for $\tilde{E}$ equal to $\underline{\tilde{E}}$. Regarding Theorem \ref{th:minimal_set_main}, denote this time complexity as $C(\underline{\tilde{E}},c, a_0)$.  Therefore, the computational complexity of Algorithm \ref{alg:minimal-leaders_final} is $O\left(C(\underline{\tilde{E}},c, a_0)\log_2\left[(\log\det(\tilde{W}_{\Delta_\mathcal{C}}^{-1})-\log\det(\tilde{W}_{\mathcal{V}}^{-1}))/a'\right]\right)$.}

We summarize the above in the next corollary, which also ends the analysis of Problem~\eqref{pr:original}.

\begin{mycorollary}[Approximation Efficiency and Computational Complexity of Algorithm~\ref{alg:min_energy_2} for Problem~\eqref{pr:original}]
Denote as $\Delta$ the selected set by Algorithm~\ref{alg:min_energy_2}. Then,
\begin{align*}
&(A,B(\Delta)) \text{ is controllable},\\
&\log\det({W}_{\Delta}^{-1}) \leq E+c\tilde{E},\\
&|\tilde{E}-\underline{\tilde{E}}|\leq a'/2,
\end{align*} where $\underline{\tilde{E}}=\text{min}\{\tilde{E}:|[\text{Algorithm}~\ref{alg:minimal-leaders_final}](\tilde{E},c,a)|\leq r\}$ is the least bound $\tilde{E}$ that Algorithm~\ref{alg:minimal-leaders_final} satisfies with an actuator set of cardinality at most $r$ for the specified $c$ and $a$. {Finally, the computational complexity of Algorithm \ref{alg:min_energy_2} is 
\[
O\left(C(\underline{\tilde{E}},c, a_0)\log_2\left(\frac{\log\det(\tilde{W}_{\Delta_\mathcal{C}}^{-1})-\log\det(\tilde{W}_{\mathcal{V}}^{-1})}{a'}\right)\right),
\]
where $C(\underline{\tilde{E}},c, a_0)$ denotes the computational complexity of Algorithm \ref{alg:minimal-leaders_final}, with respect to Theorem \ref{th:minimal_set_main}, when it is executed for $\tilde{E}$ equal to $\underline{\tilde{E}}$.}
\end{mycorollary}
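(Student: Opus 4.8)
The plan is to read the first two conclusions directly off Theorem~\ref{th:minimal_set_main} applied to the last call made inside Algorithm~\ref{alg:min_energy_2}, and to obtain the localization $|E-\underline{E}|\le a'/2$ from the usual analysis of a bisection, once the monotonicity of the bisected quantity is in place; the running-time bound then follows by multiplying the number of bisection steps by the per-step cost.

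First I would note that the set $\Delta$ that Algorithm~\ref{alg:min_energy_2} returns is exactly $[\text{Algorithm}~\ref{alg:minimal-leaders_final}](E,c,a_0)$ evaluated at the value of $E$ held on its last line. Throughout the algorithm $l$ starts at $\text{tr}(W_{\mathcal{V}}^{-1})$ and is only ever increased, $u$ is only ever decreased, and every reassignment sets $E=(l+u)/2>l\ge\text{tr}(W_{\mathcal{V}}^{-1})$; hence, by~\eqref{lo_bound_E}, every call to Algorithm~\ref{alg:minimal-leaders_final} is made with a bound $E$ in the feasible range of~\eqref{pr:min_set}. Theorem~\ref{th:minimal_set_main} then applies verbatim to the final call, yielding at once that $(A,B(\Delta))$ is controllable and that $\text{tr}(W_{\Delta}^{-1})\le(1+c)E$.

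Next, to get $|E-\underline{E}|\le a'/2$, I would set $\phi(E)\equiv|[\text{Algorithm}~\ref{alg:minimal-leaders_final}](E,c,a_0)|$ and argue that $\phi$ is non-increasing in $E$: raising $E$ only relaxes the stopping test $\text{tr}(W_\Delta+\epsilon I)^{-1}\le E$ of the greedy loop in Algorithm~\ref{alg:minimal-leaders}, so the greedy set cannot grow, and the inner bisection over $\epsilon$ inside Algorithm~\ref{alg:minimal-leaders_final} does not disturb this. Hence $\{E:\phi(E)\le r\}$ is upward-closed with infimum $\underline{E}$, so --- assuming, as the construction implicitly needs, that $\underline{E}$ lies inside the initial bracket $[\text{tr}(W_{\mathcal{V}}^{-1}),\text{tr}(W_{\Delta_\mathcal{C}}^{-1})]$ --- the test ``$|\Delta|>r$'' in Algorithm~\ref{alg:min_energy_2} coincides with the test ``$E<\underline{E}$''. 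The \texttt{while} loop is then a genuine bisection whose bracket always contains $\underline{E}$ and shrinks to width at most $a_0'$; the exit and post-loop \texttt{if} mechanics then give $|E-\underline{E}|\le a_0'/2=a'/2$, exactly as the analogous $|\epsilon-\bar{\epsilon}|\le a_0/2$ claim was obtained in the proof of Theorem~\ref{th:minimal_set_main}. The same threshold structure, via the guard in the post-loop \texttt{if}, also yields the property $|\Delta|\le r$ noted informally before the statement.

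Finally, for the complexity: the \texttt{while} loop bisects an interval of initial length $\text{tr}(W_{\Delta_\mathcal{C}}^{-1})-\text{tr}(W_{\mathcal{V}}^{-1})$ down to length $a_0'$, so it runs $O\big(\log_2[(\text{tr}(W_{\Delta_\mathcal{C}}^{-1})-\text{tr}(W_{\mathcal{V}}^{-1}))/a']\big)$ times; each pass invokes Algorithm~\ref{alg:minimal-leaders_final} once --- at a budget $E$ that, by the bisection invariant, stays of the order of $\underline{E}$ --- together with $O(1)$ bookkeeping and a one-time $O(n^3)$ cost to form $\text{tr}(W_{\mathcal{V}}^{-1})$ and $\text{tr}(W_{\Delta_\mathcal{C}}^{-1})$. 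Since Theorem~\ref{th:minimal_set_main} shows the running time of Algorithm~\ref{alg:minimal-leaders_final} to be non-increasing in its budget, each such call costs at most $C(\underline{E},c,a_0)$, and multiplying the two factors gives the claimed bound. I expect the monotonicity of $\phi$ in $E$ to be the one genuinely delicate step, precisely because Algorithm~\ref{alg:minimal-leaders_final} does not pass a fixed $\epsilon$ to Algorithm~\ref{alg:minimal-leaders} but fixes $\epsilon$ through its own inner bisection; showing that this adaptive choice preserves the non-increasing dependence of the returned cardinality on the budget $E$ is where the argument must be carried out with care.
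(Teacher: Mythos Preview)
Your approach matches the paper's. The paper does not give a separate proof for this corollary; instead, the argument is the paragraph immediately preceding the statement, which simply asserts that the \texttt{while} loop is a bisection converging to $\underline{E}$ up to accuracy $a_0'/2$, invokes Theorem~\ref{th:minimal_set_main} for controllability and the energy bound, and multiplies the number of bisection iterations by the per-step cost to get the complexity. Your write-up follows the same structure but is considerably more careful.

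In particular, you correctly flag as the delicate step the monotonicity of $\phi(E)=|[\text{Algorithm}~\ref{alg:minimal-leaders_final}](E,c,a_0)|$ in $E$, which is what makes the test ``$|\Delta|>r$'' a genuine threshold test and hence makes the bisection valid. The paper does not address this at all: it simply says the loop ``converges \ldots\ to the smallest value $\underline{E}$ of $E$ such that $|\Delta|\le r$'' and that the post-loop \texttt{if} ``ensures that $E$ is set below $\underline{E}$.'' Your observation that the adaptive choice of $\epsilon$ inside Algorithm~\ref{alg:minimal-leaders_final} (with $u\leftarrow 1/E$) could in principle disrupt this monotonicity is a real subtlety that the paper leaves implicit; your proposal is therefore at least as rigorous as, and arguably more complete than, the paper's own treatment.
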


{From a computational perspective, we can speed up Algorithm \ref{alg:min_energy_2} using the methods we discussed in the end of Section \ref{subsec:Min_N_alg}.  Moreover, for a wide class of systems, e.g., when $a=O(n^{n^{c_1}})$, where $c_1$ is a positive constant, independent of $n$, and similarly for $a'$ and $\log\det(\tilde{W}_{\Delta_\mathcal{C}}^{-1})$, this algorithm runs in polynomial time, due to the logarithmic dependence on $a$, $a'$ and $\log\det(\tilde{W}_{\Delta_\mathcal{C}}^{-1})$, respectively.}


\section{Concluding Remarks}\label{sec:conc}

We addressed two actuator placement problems in linear systems: First, the problem of minimal actuator placement so that the volume of the set of states reachable with one or less units of input energy is lower bounded by a desired value, and then the problem of cardinality-constrained actuator placement for minimum control effort, where the optimal actuator set is selected so that the volume of the set of states that
can be reached with one unit or less of input energy is maximized.  Both problems were shown to be NP-hard, while for the first one we provided a best approximation algorithm {{for a given range of the problem parameters}}. Next, we proposed an efficient approximation algorithm for the solution of the second problem as well.    Our future work is focused on exploring the effect that the underlying network topology of the involved system has on these actuator placement problems, as well as investigating distributed implementations of their corresponding algorithms.

\appendix

\subsection{Computational Complexity of Problems \eqref{pr:min_set} and \eqref{pr:original}}\label{appen:NP}

{We prove that Problem \ref{pr:min_set} is NP-hard, providing an instance that reduces to the NP-hard controllability problem introduced in \cite{2013arXiv1304.3071O}.   In particular, it is shown in \cite{2013arXiv1304.3071O} that deciding if \eqref{eq:dynamics} is controllable by a zero-one diagonal matrix $B$ with $r+1$ non-zero entries reduces to the $r$-hitting set problem, as we define it below, which is NP-hard \cite{arora2009computational}.}

\begin{mydef}[$r$-hitting set problem]\label{def:k_HS}
{Given a finite set $\mathcal{M}$ and a collection $\mathcal{C}$ of non-empty subsets of $\mathcal{M}$, find an $\mathcal{M'}\subseteq \mathcal{M}$ of cardinality at most $r$ that has a non-empty intersection with each set in $\mathcal{C}$.}
\end{mydef}
{Without loss of generality, we assume that every element of $\mathcal{M}$ appears in at least one set in $\mathcal{C}$ and all sets in $\mathcal{C}$ are non-empty.  Moreover in Definition \ref{def:k_HS}, we let $|\mathcal{C}|=p$ and $\mathcal{M}=\{1, 2,$ $\ldots, m\}$, and define $C\in \mathbb{R}^{p \times m}$ such that $C_{ij}=1$ if the $i$-th set contains the element $j$ and zero otherwise.}

\begin{mytheorem}[Computational Complexity of Problem \eqref{pr:min_set}]\label{th:NP_1}
{Problem \eqref{pr:min_set} is NP-hard.}
\end{mytheorem}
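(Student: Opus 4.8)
The plan is to reduce the $r$-hitting set problem to an instance of Problem~\eqref{pr:min_set}, leveraging the construction of~\cite{2013arXiv1304.3071O} that already reduces $r$-hitting set to the minimal controllability problem. First I would recall that construction: given the finite set $\mathcal{M}=\{1,\ldots,m\}$ and the collection $\mathcal{C}$ with incidence matrix $C\in\mathbb{R}^{p\times m}$, one builds a system matrix $A$ (of dimension $n$ polynomial in $m$ and $p$) with the property that a zero-one diagonal $B(\Delta)$ renders $(A,B(\Delta))$ controllable if and only if the set $\Delta$, suitably interpreted, is a hitting set for $\mathcal{C}$. In that construction there is always a ``mandatory'' subset of actuators that must be present for controllability, and the remaining freedom corresponds exactly to choosing a hitting set; thus deciding whether controllability is achievable with at most $r+1$ actuators is equivalent to deciding whether a hitting set of size at most $r$ exists.

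Next I would connect this to Problem~\eqref{pr:min_set}. The key observation is that Problem~\eqref{pr:min_set} with the bound $E\to\infty$ degenerates to the pure minimal controllability problem, as already noted after~\eqref{lo_bound_E} in the text. So I would argue that for the specific $A$ coming from the reduction, there is a threshold value $E_0$ — for instance $E_0 = \max\{\text{tr}(W_\Delta^{-1}) : \Delta\subseteq\mathcal{V},\ (A,B(\Delta))\text{ controllable},\ |\Delta|\le r+1\}$, or more simply any $E$ at least as large as $\text{tr}(W_\mathcal{V}^{-1})$ — such that the energy constraint $\text{tr}(W_\Delta^{-1})\le E$ is satisfied by \emph{every} controllable $\Delta$ of size at most $r+1$. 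With such an $E$, Problem~\eqref{pr:min_set} restricted to sets of cardinality $\le r+1$ asks precisely for the minimum-size controllable actuator set, and its optimal value is at most $r+1$ if and only if a hitting set of size at most $r$ exists. Hence a polynomial-time algorithm for Problem~\eqref{pr:min_set} would decide $r$-hitting set, which is NP-hard; therefore Problem~\eqref{pr:min_set} is NP-hard.

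The main obstacle — and the step requiring care — is establishing that the threshold $E_0$ is well-defined and, more importantly, that choosing it does not itself require solving an NP-hard problem (which would break the polynomial-time reduction). I would handle this by exhibiting an \emph{explicit, polynomially computable} upper bound on $\text{tr}(W_\Delta^{-1})$ valid for all controllable $\Delta$ in the reduced instance: since $W_\Delta \succeq W_{\{i\}}$ for any single mandatory actuator $i$ and since the entries of $A$ in the construction are fixed small integers, $\text{tr}(W_{\{i\}}^{-1})$ is a finite number computable from $A$ alone; taking $E$ equal to (any efficiently computable upper bound on) this quantity suffices, because $W_\Delta \succeq W_{\{i\}}$ implies $\text{tr}(W_\Delta^{-1}) \le \text{tr}(W_{\{i\}}^{-1})\le E$. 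A secondary technical point is to verify that the ``mandatory actuator'' structure of the~\cite{2013arXiv1304.3071O} construction indeed guarantees such an $i$ exists in every feasible $\Delta$; this follows directly from the controllability characterization in that paper, so I would cite it rather than re-derive it. Finally, NP-hardness of Problem~\eqref{pr:original} follows immediately afterward by the same threshold argument with the roles of objective and constraint swapped, as the text already indicates.
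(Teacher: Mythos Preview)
Your overall strategy---reduce from the NP-hard minimal controllability problem of~\cite{2013arXiv1304.3071O} by choosing $E$ large enough that the energy constraint becomes vacuous, while ensuring $E$ has polynomial description length---is exactly the route the paper takes. The gap is in the specific mechanism you propose for producing such an $E$.

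Your suggestion is to take $E \ge \text{tr}(W_{\{i\}}^{-1})$ for the ``mandatory'' actuator $i$ in the Olshevsky construction, arguing that $W_\Delta \succeq W_{\{i\}}$ forces $\text{tr}(W_\Delta^{-1}) \le \text{tr}(W_{\{i\}}^{-1})$. But this requires $W_{\{i\}}$ to be invertible, i.e.\ the system to be controllable from node $i$ alone. In the reduction of~\cite{2013arXiv1304.3071O} this is \emph{never} the case: if the mandatory node by itself rendered the system controllable, the answer to ``is there a controllable actuator set of size $\le r+1$?'' would be trivially yes for every instance, and the reduction would collapse. So $W_{\{i\}}$ is singular, $\text{tr}(W_{\{i\}}^{-1})$ is undefined, and your candidate bound is vacuous. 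Your earlier alternative, $E \ge \text{tr}(W_{\mathcal V}^{-1})$, goes the wrong way: $\text{tr}(W_{\mathcal V}^{-1})$ is a lower bound on $\text{tr}(W_\Delta^{-1})$, not an upper bound.

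The paper closes this gap differently: it fixes $t_1-t_0=\ln n$, exploits the explicit integer structure of $V$ and $V^{-1}$ in the Olshevsky construction to show that a scaled version of $W_B$ has entries that are integer polynomials in $n$ with coefficients bounded by $(2n)^{2(n+3)}$, and then uses Cramer's rule to bound $\text{tr}(W_B^{-1})$ uniformly over \emph{all} controllable $B$ by $E=n(2n)^{2n^2+12n+2}$. Since $\log E = O(n^3)$, this $E$ is polynomially describable, and with it Problem~\eqref{pr:min_set} coincides with the NP-hard controllability problem. The missing ingredient in your proposal is precisely this uniform, instance-independent bound derived from the matrix structure rather than from any single-actuator Gramian.
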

\begin{proof}
{We show that Problem \eqref{pr:min_set} for $A$ as described below and with $E=n(2n)^{2n^2+12n +2}-n$ is equivalent to the NP-hard controllability problem introduced in \cite{2013arXiv1304.3071O}.  Therefore, since $E$ can be described in polynomial time, as $log(E)=O(n^3)$, we conclude that Problem \eqref{pr:min_set} is NP-hard.}

{In particular, as in \cite{2013arXiv1304.3071O}, let $n = m+p+1$ and $A = V^{-1} D V$, where $D \equiv \text{diag}(1,2,\ldots,n)$ and\footnote{$V$ is invertible since it is strictly diagonally dominant.}}
\begin{align} 
V= \left[ 
\begin{array}{ccc}
2I_{m\times m} & 0_{m \times p} & e_{m\times 1} \\
C & (m+1)I_{p\times p} & 0_{p \times 1} \\
0_{1 \times m} & 0_{1 \times p} & 1
\end{array}\right].
\end{align}
{It is shown in \cite{2013arXiv1304.3071O} that deciding if $A$ is controllable by a zero-one diagonal matrix $B$ with $r+1$ non-zero entries is NP-hard.}

{Now, observe that all the entries of $V$ are integers either zero or at most $m+1$. Moreover, with respect to the entries of $V^{-1}$, it is shown in \cite{2013arXiv1304.3071O} that:
\begin{itemize}
\item For $i=1,2,\ldots, m$: It has a $1/2$ in the $(i,i)$-th place and a $-1/2$ in the $(i,n)$-th place, and zeros elsewhere.
\item For $i=m+1,m+2,\ldots,m+p$: It has a $1/(m+1)$ in the $(i,i)$-th place, a $-1/(2(m+1))$ in the $(i,j)$-th place where $j \in C_i$ ($C_i$ is the corresponding set of the collection $\mathcal{C}$), and $|C_i|/(2(m+1))$ in the $(i,n)$-th place; every other entry of the $i$-th row is zero.
\item Finally, the last row of $V^{-1}$ is $[0,0,\ldots,0,1]$.
\end{itemize}
Therefore, $2(m+1)V^{-1}$ has all its entries as integers that are either zero or at most $n^2$, in absolute value.}

{Consider the controllability matrix associated with this system, given a zero-one diagonal $B$ that makes it controllable, and denote it as $W_B$.  Then,
\begin{align*}
W_B&=\int_{t_0}^{t_1} \mathrm{e}^{{A}(t-t_0)} {B}{B}^{T} \mathrm{e}^{{A}^{T}(t-t_0)}\,\mathrm{d}{t}\\
&=V^{-1}\int_{t_0}^{t_1} \mathrm{e}^{{D}(t-t_0)} V{B}V^T \mathrm{e}^{{D}^{T}(t-t_0)}\,\mathrm{d}{t}V^{-T}.
\end{align*}
}

{Let $t_1-t_0=ln(n)$.  Then, $(2n)!\int_{0}^{t_1-t_0} \mathrm{e}^{{D}t} V{B}V^T \mathrm{e}^{{D}^{T}t}\,\mathrm{d}{t}$ evaluates to a matrix that has entries of the form $c_0+c_1n+c_2n^2+\ldots+c_nn^n$, where $c_0, c_1, \ldots, c_n$ are non-negative integers and all less than $(2n)! \leq (2n)^{2n}$.  Thereby,
\[
W'_B\equiv4(m+1)^2(2n)!V^{-1}\int_{0}^{t_1-t_0} \mathrm{e}^{{D}t} V{B}V^T \mathrm{e}^{{D}^{T}t}\,\mathrm{d}{t}V^{-T},
\]
has entries of the form $c'_0+c'_1n+c'_2n^2+\ldots+c'_nn^n$, where $c'_0, c'_1, \ldots, c'_n$ are integers and all less than $(2n)^{2(n+3)}$ in absolute value due to the pre and post multiplications by $2(m+1)V^{-1}$ and $2(m+1)V^{-T}$, respectively.}

{We are interested on upper bounding $\log\det(W_B^{-1})$: since for $x>0$, $\log(x)\leq x-1$, $\log\det(W_B^{-1})\leq tr(W_B^{-1})-n$.  In addition, $ tr(W_B^{-1})=4(m+1)^2(2n)!tr({W'_B}^{-1})\leq (2n)^{2(n+1)}tr({W'_B}^{-1})$.  Therefore, we upper bound $tr({W'_B}^{-1})$: Using Crammer's rule to compute ${W'_B}^{-1}$, due to the form of the entries of ${W'_B}$, all of its elements, including the diagonal ones, if they approach infinity, they approach it with at most $n! n^n(2n)^{2n(n+3)}$ $<$ $  (2n)^{2n(n+5)}$ speed, and as a result $tr({W'_B}^{-1})\leq n(2n)^{2n(n+5)}$.  Hence, $tr(W_B^{-1})\leq n(2n)^{2n(n+5)+2(n+1)}=n(2n)^{2n^2+12n +2}$, for any $B$ that makes \eqref{eq:dynamics} controllable.  Thus, if we set $E= n(2n)^{2n^2+12n +2}-n$ (which implies $log(E)=O(n^3)$ so that $E$ can be described polynomially), Problem \eqref{pr:min_set} is equivalent to the controllability problem of \cite{2013arXiv1304.3071O}, which is NP-hard.}
\end{proof}

An immediate consequence of Theorem \ref{th:NP_1} is the following one. 

\begin{mycorollary}[Computational Complexity of Problem \eqref{pr:original}]
{Problem \eqref{pr:original} is NP-hard.}
\end{mycorollary}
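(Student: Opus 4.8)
The plan is to derive the NP-hardness of Problem~\eqref{pr:original} as a direct corollary of Theorem~\ref{th:NP_1}, using essentially the same construction. Recall that Theorem~\ref{th:NP_1} already establishes that, for the specific matrix $A = V^{-1}DV$ built from an arbitrary instance of the $r$-hitting set problem and for the particular choice $E = n(2n)^{2n^2+12n+2}$, every zero-one diagonal $B$ that renders \eqref{eq:dynamics} controllable automatically satisfies $\text{tr}(W_B^{-1}) \leq E$. In other words, on this family of instances the energy constraint $\text{tr}(W_\Delta^{-1}) \leq E$ is implied by controllability and is therefore vacuous for the purpose of the construction.

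First I would take the same system matrix $A$ and the same parameters $n = m+p+1$, $t_1 - t_0 = \ln(n)$ as in the proof of Theorem~\ref{th:NP_1}, but now feed the pair $(A, E)$ with $E = n(2n)^{2n^2+12n+2}$ into Problem~\eqref{pr:original} with cardinality bound $r+1$ (one more than the hitting-set parameter, matching the reduction of~\cite{2013arXiv1304.3071O}, which concerns a diagonal $B$ with $r+1$ non-zero entries). The domain of Problem~\eqref{pr:original} consists of actuator sets $\Delta$ with $|\Delta| \leq r+1$ such that $(A,B(\Delta))$ is controllable; on this domain the objective $\text{tr}(W_\Delta^{-1})$ is finite, and by the bound established inside Theorem~\ref{th:NP_1} it is always at most $E$. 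Hence Problem~\eqref{pr:original} with these parameters has a feasible solution (equivalently, its optimal value is finite, equivalently it is not $+\infty$) if and only if there exists a zero-one diagonal $B$ with at most $r+1$ non-zero entries making $A$ controllable — which by~\cite{2013arXiv1304.3071O} holds if and only if the underlying $r$-hitting set instance has a hitting set of size at most $r$. Since $E$ admits a polynomial-size description ($\log E = O(n^3)$, as noted in Theorem~\ref{th:NP_1}) and the construction of $A$ from $C$ is polynomial, this is a polynomial-time reduction from the NP-hard $r$-hitting set problem to (the decision version of) Problem~\eqref{pr:original}; therefore Problem~\eqref{pr:original} is NP-hard.

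The one point that needs care — and the only real obstacle — is bookkeeping the relationship between the cardinality parameter $r$ appearing in Problem~\eqref{pr:original} and the parameters of the hitting-set reduction of~\cite{2013arXiv1304.3071O}, together with making precise what "the decision version of Problem~\eqref{pr:original}" is. Since Problem~\eqref{pr:original} as stated is an optimization problem whose feasible region already encodes controllability, the cleanest formulation is: deciding whether the optimal value of Problem~\eqref{pr:original} is finite is NP-hard, because for the constructed instances finiteness of the optimal value is exactly solvability of the hitting-set instance. (Alternatively one may phrase it as deciding whether the optimal value is at most $E$, which for these instances is the same thing.) Everything else is immediate reuse of the machinery already assembled for Theorem~\ref{th:NP_1}; no new estimates are required, and in particular the delicate Cramér's-rule bound on $\text{tr}(W_B^{-1})$ carries over verbatim.
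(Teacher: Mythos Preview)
Your proposal is correct and is a valid elaboration of the paper's treatment, which literally consists of the single sentence ``An immediate consequence of Theorem~\ref{th:NP_1} is the following one'' with no further detail. Your argument---that for the constructed $A$ the feasible region of Problem~\eqref{pr:original} with cardinality bound $r+1$ is nonempty if and only if the underlying $r$-hitting set instance is solvable---is exactly the kind of reasoning the paper is pointing at.

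One small simplification: you do not need to bring $E$ into the picture at all. Since the domain of Problem~\eqref{pr:original} already requires $(A,B(\Delta))$ to be controllable, the question ``is Problem~\eqref{pr:original} feasible?'' is \emph{identical} to the controllability decision problem of~\cite{2013arXiv1304.3071O}, independent of any energy bound. The Cram\'er's-rule estimate from Theorem~\ref{th:NP_1} is therefore not even needed for this corollary; all you are using is that feasibility of~\eqref{pr:original} coincides with the NP-hard controllability question. You effectively recognize this when you write that finiteness of the optimal value is the operative criterion, so this is not a gap---just unnecessary overhead in the write-up.
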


\subsection{The Greedy Algorithm used in the Supermodular Minimization Literature is Inefficient for solving Problem~\eqref{pr:original_approx}}\label{appen:badalg}

Consider Algorithm~\ref{alg:r-leaders} which is in accordance with the supermodular minimization literature~\cite{Nemhauser:1988:ICO:42805, citeulike:416650,krause2012submodular}.

\begin{algorithm}[H]
\caption{Greedy algorithm for Problem~\eqref{pr:original_approx}.}\label{alg:r-leaders}
\begin{algorithmic}
\REQUIRE Maximum number of actuators $r$, approximation parameter $\epsilon$, number of steps that the algorithm will run $l$, matrices ${W}_1, {W}_2, \ldots, {W}_n$.
\ENSURE Actuator set $\Delta_l$
\STATE $\Delta_0\leftarrow\emptyset$ , $i\leftarrow 0$
\WHILE {$i < l$} \STATE 
	 $a_i \leftarrow \text{argmax}_{a \in \mathcal{V}\setminus \Delta}\{
	\log\det({W}_{\Delta_i}+\epsilon{I})^{-1}-\log\det({W}_{\Delta_i\cup \{a\}}+\epsilon{I})^{-1}
	\}$\\
    \quad $\Delta_{i+1} \leftarrow \Delta_i \cup \{a_i\}, i\leftarrow i+1	$
	
\ENDWHILE
\end{algorithmic} 
\end{algorithm}

The following is true for its performance. 

\begin{myfact}
Let $v^\star$ denote the value of Problem~\eqref{pr:original_approx}.  Then,   Algorithm~\ref{alg:r-leaders} guarantees that for any positive integer $l$,
\begin{align*}
\log\det({W}_{\Delta_l}+\epsilon{I})^{-1}\leq (1-e^{-l/r})v^\star+n\log(\epsilon^{-1})e^{-l/r}. 
\end{align*}
\end{myfact}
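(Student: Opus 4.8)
The plan is to track the greedy objective by relating the marginal decrease at each step to the gap between the current objective value and the optimal value $v^\star$, in the standard fashion used for the submodular maximization curve, and then to iterate the resulting recursion. Concretely, I would work with the non-negative, non-decreasing, submodular surrogate $h(\Delta) \equiv n\epsilon^{-1} - \text{tr}(W_\Delta + \epsilon I)^{-1}$ (Proposition~\ref{prop:subm} gives submodularity; non-negativity follows since $W_\Delta + \epsilon I \succeq \epsilon I$ implies $\text{tr}(W_\Delta+\epsilon I)^{-1} \le n\epsilon^{-1}$; monotonicity follows from $W_{\Delta_1} \preceq W_{\Delta_2}$ for $\Delta_1 \subseteq \Delta_2$). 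In these terms, maximizing $h$ over $|\Delta|\le r$ is equivalent to Problem~\eqref{pr:original_approx}, and if $v^\star$ is the value of~\eqref{pr:original_approx} then $h^\star \equiv n\epsilon^{-1} - v^\star$ is the optimal value of the $h$-maximization. Algorithm~\ref{alg:r-leaders} is exactly the greedy algorithm for this $h$-maximization.

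The first key step is the standard one-step improvement lemma: if $\Delta_i$ is the greedy set after $i$ steps and $\Delta^\star$ is an optimal set of size at most $r$ for the $h$-problem, then by submodularity and monotonicity $h(\Delta^\star) - h(\Delta_i) \le \sum_{a \in \Delta^\star \setminus \Delta_i}\bigl(h(\Delta_i \cup \{a\}) - h(\Delta_i)\bigr) \le r\bigl(h(\Delta_{i+1}) - h(\Delta_i)\bigr)$, the last inequality because the greedy choice $a_i$ maximizes the marginal gain. Rearranging gives $h^\star - h(\Delta_{i+1}) \le (1 - 1/r)\bigl(h^\star - h(\Delta_i)\bigr)$. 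The second step is to iterate this $l$ times from $\Delta_0 = \emptyset$, yielding $h^\star - h(\Delta_l) \le (1-1/r)^l\bigl(h^\star - h(\emptyset)\bigr) \le e^{-l/r}\bigl(h^\star - h(\emptyset)\bigr)$, using $1 - x \le e^{-x}$. Since $h(\emptyset) = n\epsilon^{-1} - \text{tr}(\epsilon I)^{-1}\cdot$\,— wait, more carefully $h(\emptyset) = n\epsilon^{-1} - \text{tr}((W_\emptyset+\epsilon I)^{-1}) = n\epsilon^{-1} - n\epsilon^{-1} = 0$ since $W_\emptyset = 0$. So $h^\star - h(\Delta_l) \le e^{-l/r} h^\star$, i.e. $h(\Delta_l) \ge (1 - e^{-l/r}) h^\star$.

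The final step is to translate back: $n\epsilon^{-1} - \text{tr}(W_{\Delta_l}+\epsilon I)^{-1} \ge (1-e^{-l/r})(n\epsilon^{-1} - v^\star)$, hence
\begin{align*}
\text{tr}(W_{\Delta_l}+\epsilon I)^{-1} &\le n\epsilon^{-1} - (1-e^{-l/r})(n\epsilon^{-1} - v^\star)\\
&= (1-e^{-l/r})v^\star + e^{-l/r} n\epsilon^{-1},
\end{align*}
which is exactly the claimed bound. I expect no serious obstacle here — the only points needing a little care are verifying that $h$ is genuinely non-negative and non-decreasing (so the submodular greedy guarantee applies verbatim) and confirming $h(\emptyset)=0$ so that the $(1-1/r)^l h^\star$ contraction has the clean form above; both are immediate from $W_\emptyset = 0$ and $W_\Delta \succeq 0$. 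One could alternatively avoid introducing $h$ and argue directly on $\text{tr}(W_{(\cdot)}+\epsilon I)^{-1}$ using supermodularity, but routing through the non-negative normalized surrogate makes the appeal to the classical Nemhauser–Wolsey–Fisher bound cleanest.
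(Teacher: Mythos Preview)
Your proposal is correct and takes essentially the same approach as the paper: both route through the non-negative, non-decreasing, submodular surrogate $h(\Delta)=n\epsilon^{-1}-\text{tr}(W_\Delta+\epsilon I)^{-1}$ and invoke the Nemhauser--Wolsey--Fisher greedy guarantee. The only difference is that the paper simply cites Theorem~9.3 of \cite{Nemhauser:1988:ICO:42805}, whereas you unpack that theorem inline via the standard one-step contraction $h^\star-h(\Delta_{i+1})\le(1-1/r)(h^\star-h(\Delta_i))$ and the observation $h(\emptyset)=0$.
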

\begin{proof}
It follows from Theorem 9.3, Ch.~III.3.9. of~\cite{Nemhauser:1988:ICO:42805}, since $-\log\det({W}_{\Delta_l}+\epsilon{I})^{-1}+n\log(\epsilon^{-1})$ is a non-negative, non-decreasing, and submodular function with respect to the choice of $\Delta$ (cf.~Proposition~\ref{prop:subm}).  
\end{proof}

Algorithm~\ref{alg:r-leaders} suffers from an error term that is proportional to $n\log(\epsilon^{-1})$.    Moreover, it is possible that Algorithm~\ref{alg:r-leaders} returns an actuator set that does not render~\eqref{eq:dynamics} controllable.  Therefore, Algorithm~\ref{alg:r-leaders} is inefficient for solving Problem~\eqref{pr:original_approx}.


\bibliographystyle{IEEEtran}
\bibliography{newRef}

\end{document}